\newtheorem{precor}{{\bf Corollary}}
\newenvironment{cor}{\begin{precor}{\hspace{-0.5
em}{\bf.\ }}}{\end{precor}}
\newtheorem{precon}{{\bf Conjecture}}
\newtheorem{predefin}{{\bf Definition}}
\newenvironment{defin}[1]{\begin{predefin}{\hspace{-0.5
em}{\bf.\ }}{\rm
#1}\hfill{$\blacktriangleleft$}}{\end{predefin}}
\newtheorem{preexm}{{\bf Example}}
\newenvironment{exm}[1]{\begin{preexm}{\hspace{-0.5
em}{\bf.\ }}{\rm #1}\hfill{$\blacktriangleright$}}{\end{preexm}}
\newtheorem{preappl}{{\bf Application}}
\newtheorem{prelem}{{\bf Lemma}}
\newenvironment{lem}{\begin{prelem}{\hspace{-0.5
em}{\bf.\ }}}{\end{prelem}}
\newtheorem{preproof}{{\bf Proof.\ }}
\newenvironment{proof}[1]{\begin{preproof}{\rm
#1}\hfill{$\blacksquare$}}{\end{preproof}}
\newtheorem{presproof}{{\bf Sketch of Proof.\ }}
\newtheorem{prethm}{{\bf Theorem}}
\newenvironment{thm}{\begin{prethm}{\hspace{-0.5
em}{\bf.\ }}}{\end{prethm}}
\newtheorem{prealphthm}{{\bf Theorem}}
\newenvironment{alphthm}{\begin{prealphthm}{\hspace{-0.5
em}{\bf.\ }}}{\end{prealphthm}}
\newtheorem{prepro}{{\bf Proposition}}
\newenvironment{pro}{\begin{prepro}{\hspace{-0.5
em}{\bf.\ }}}{\end{prepro}}
\newtheorem{preprb}{{\bf Problem}}
\newenvironment{prb}{\begin{preprb}{\hspace{-0.5
em}{\bf.\ }}}{\end{preprb}}
\def\conct[#1,#2]{\mbox {${#1} \leftrightarrow {#2}$}}
\def\dconct[#1,#2]{\mbox {${#1} \rightarrow {#2}$}}
\def\deg[#1,#2]{\mbox {$d_{_{#1}}(#2)$}}
\def\mindeg[#1]{\mbox {$\delta_{_{#1}}$}}
\def\maxdeg[#1]{\mbox {$\Delta_{_{#1}}$}}
\def\outdeg[#1,#2]{\mbox {$d_{_{#1}}^{^+}(#2)$}}
\def\minoutdeg[#1]{\mbox {$\delta_{_{#1}}^{^+}$}}
\def\maxoutdeg[#1]{\mbox {$\Delta_{_{#1}}^{^+}$}}
\def\indeg[#1,#2]{\mbox {$d_{_{#1}}^{^-}(#2)$}}
\def\minindeg[#1]{\mbox {$\delta_{_{#1}}^{^-}$}}
\def\maxindeg[#1]{\mbox {$\Delta_{_{#1}}^{^-}$}}
\def\isdef{\mbox {$\ \stackrel{\rm def}{=} \ $}}
\def\dre[#1,#2,#3]{\mbox {${\cal E}_{_{#3}}(#1,#2)$}}
\def\pdre[#1,#2,#3]{\mbox {${\cal P}_{_{#3}}(#1,#2)$}}
\def\var[#1,#2]{\mbox {${\rm Var}_{_{#1}}(#2)$}}
\def\ls[#1]{\mbox {$\xi^{^{#1}}$}}
\def\onvhom[#1,#2]{\mbox {${\rm Hom^{v}}(#1,#2)$}}
\def\onehom[#1,#2]{\mbox {${\rm Hom^{e}}(#1,#2)$}}
\def\core[#1]{\mbox {$#1^{^{\bullet}}$}}
\def\cay[#1,#2]{\mbox {${\rm Cay}({#1},{#2})$}}
\def\cays[#1,#2]{\mbox {${\rm Cay_{s}}({#1},{#2})$}}
\def\dirc[#1]{\mbox {$\stackrel{\rightarrow}{C}_{_{#1}}$}}
\def\cycl[#1]{\mbox {${\bf Z}_{_{#1}}$}}
\def\sdg[#1]{\mbox {$\stackrel{\leftrightarrow}{#1}$}}
\def\go{\matr{g}_{_{odd}}}
\def\ii{\mathsf{v}}
\def\oo{\mathsf{u}}
\long\def\Perm[#1]{\mbox{$\lfloor #1 \rceil$}}
\long\def\nat[#1]{\mbox{$\widehat{1...#1}$}}
\newcommand{\matr}[1]{\mathrm{#1}} 
 \def\authora{Amir Daneshgar\footnote{Correspondence should be addressed to {\tt daneshgar@sharif.ir}.}}
\def\authorb{Meysam Madani}
\def\addressa{Department of Mathematical Sciences\\
Sharif University of Technology\\
{P.O. Box {\rm 11155--9415}} \\
Tehran, Iran. \\}
\author{
{ \authora }\\
{ \authorb }\\[1mm]
\addressa
}
 \title{On the odd girth and the circular chromatic number of generalized Petersen graphs}
 \date{\today}
\begin{document}
\maketitle

 \begin{abstract}{
A class of simple graphs such as ${\cal G}$ is said to be {\it odd-girth-closed}
if for any positive integer $g$ there exists a
graph $\matr{G} \in {\cal G}$ such that the odd-girth of $\matr{G}$ is greater than or equal to $g$.
An odd-girth-closed class of graphs ${\cal G}$ is said to be {\it odd-pentagonal}
if there exists a positive integer $g^*$
depending on ${\cal G}$ such that any graph $\matr{G} \in {\cal G}$ whose odd-girth is greater than
$g^*$ admits a homomorphism to the five cycle (i.e. is $\matr{C}_{_{5}}$-colorable).

 In this article, we show that finding the odd girth of generalized Petersen graphs can be transformed to an integer programming problem, and using this we explicitly compute the odd girth of such graphs, showing that the class is odd-girth-closed.
Also, motivated by showing that the class of generalized Petersen graphs is odd-pentagonal, we study the circular chromatic number of such graphs.
\\[.4cm]
{\textbf{Keywords:} Generalized Petersen graphs, odd girth, circular coloring, integer programming. }
}\end{abstract}

 \section{Introduction}

 Let us call a class of simple graphs ${\cal G}$ {\it girth-closed} (resp. {\it odd-girth-closed})
if for any positive integer $g$ there exists a
graph $\matr{G} \in {\cal G}$ such that the girth (resp. odd-girth) of $\matr{G}$ is greater than or equal to $g$.
A girth-closed (resp. odd-girth-closed) class of graphs ${\cal G}$ is said to be {\it pentagonal} (resp. {\it odd-pentagonal})
if there exists a positive integer $g^*$
depending on ${\cal G}$ such that any graph $\matr{G} \in {\cal G}$ whose girth (resp. odd-girth) is greater than
$g^*$ admits a homomorphism to the five cycle (i.e. is $\matr{C}_{_{5}}$-colorable).
 The following question of J.~Ne\v{s}et\v{r}il has been the main motivation for a number of contributions in graph theory.

 \begin{prb}{\rm \cite{Nes}}\label{prb:pentagon}
Is the class of simple $3$-regular graphs pentagonal?
\end{prb}

 Note that every simple $3$-regular graph except $\matr{K}_{_{4}}$ admits a homomorphism to $\matr{K}_{_{3}} \simeq \matr{C}_{_{3}} $ by Brooks' theorem. On the other hand, it is quite interesting to note that the answer is negative if we ask the same question with the five cycle $\matr{C}_{_{5}}$ replaced by $\matr{C}_{_{7}}, \matr{C}_{_{9}}$ or $\matr{C}_{_{11}}$ (see \cite{CAT88,ghebleh,HAT05,KNS01,WW01} for this and the background on other negative results).

 A couple of relaxations of Problem~\ref{prb:pentagon} have already been considered in the literature.
The following relaxation has been attributed to L.Goddyn (see \cite{ghebleh}) and to best of our knowledge is still open.

 \begin{prb} Is it true that every cubic graph with sufficiently large girth has circular chromatic number strictly less than $3$?
\end{prb}

 Also, M.~Ghebleh introduced the following relaxation of Problem~\ref{prb:pentagon} and answered it negatively by
constructing the class of spiderweb graphs whose circular chromatic numbers are equal to $3$ (see \cite{ghebleh} for more on
spiderweb graphs).

 \begin{prb}{\rm \cite{ghebleh}}
Is the class of simple $3$-regular graphs odd-pentagonal?
\end{prb}

 Although, to answer Problem~\ref{prb:pentagon} negatively it is sufficient to introduce a subclass of simple $3$-regular graphs
which is not pentagonal, it is still interesting to study pentagonal subclasses of $3$-regular graphs, even if the problem has a negative answer, since this study will definitely reveal structural properties that can influence the graph homomorphism problem.

 Clearly, to analyze the above mentioned problems one has to have a good understanding about the cycle structure of the graphs, where it seems that the answers are strongly related to the number theoretic properties of the cycle lengths of the graphs being studied.

 This article may be classified into the positive part of the above scenario which is related to the study of pentagonal
and odd-pentagonal subclasses of $3$-regular graphs on the one hand, and the study of subclasses of $3$-regular graphs with
circular chromatic number less than $3$ on the other\footnote{See \cite{ghebleh} and references therein for other related results and the background. Also see \cite{PZ02} for a similar approach.}.
Also, note that other variants of the positive scenario have already been studied either using stronger average degree conditions or topological conditions on the sparse graphs. For instance, we have 

\begin{alphthm}
{\rm \cite{oddclose}}  The class of simple graphs as $\matr{G}$ for which every subgraph of $\matr{G}$ has average degree less than $12/5$, is pentagonal $($actually with $g^*=3)$.
\end{alphthm}
Also, using results of \cite{close} we deduce that,

\begin{alphthm}
{\rm \cite{close}}  The class of planar graphs, projective planar graphs, graphs that can be embedded on the torus or Klein bottle are pentagonal.
\end{alphthm}

Along the same lines we also have,

\begin{alphthm}
{\rm \cite{GGH01}}  For every fixed simple graph $\matr{H}$ the class of $\matr{H}$-minor free graphs is pentagonal.
\end{alphthm}

 Our major motivation to study the cycle structure of generalized Petersen graphs was, of course,
related to our belief that the class of generalized Petersen graphs is odd-pentagonal. Needless to say, some results and techniques used in the sequel are related to the specific properties of generalized Petersen graphs and may be of independent interest.

 In the rest of this section we go through a couple of preliminary definitions and concepts and after that we will
explain the main results of this article.
In subsequent sections, first, in Section~\ref{sec:oddgirth} we explicitly compute the odd-girth of generalized Petersen graphs and it will follow that the class is odd-girth closed (while it is not girth-closed). Then we will use the data to study the circular chromatic number of these graphs in Section~\ref{sec:circhrom} and will find partial evidence for a positive answer to our motivating question.

 \subsection{Preliminaries}

 In this article,
for two integers $m \leq n$, the notation $m|n$ indicates that $m$ divides $n$,
the greatest common divisor is denoted by $\gcd(m,n)$ and also, we define
$$[m,n] \isdef \{m,m+1,\cdots,n\}.$$
The size of any finite set $A$ is denoted by $|A|$.

 Hereafter, we only consider finite simple graphs as $\matr{G}=(V(\matr{G}),E(\matr{G}))$, where $V(\matr{G})$ is the vertex set and $E(\matr{G})$ is the edge set. An edge with end vertices $u$ and $v$ is denoted by $uv$. Moreover,
for any graph $\matr{G}$, the odd girth $\go (\matr{G})$ is the length of the shortest odd cycle of $\matr{G}$.
The complete graph on $n$ vertices and the cycle of length $n$ are denoted by $\matr{K}_{_{n}}$ and $\matr{C}_{_{n}}$, respectively (up to isomorphism).

 Throughout the article, a homomorphism $f: \matr{G} \longrightarrow \matr{H}$ from a graph $\matr{G}$ to a
graph $\matr{H}$ is a map $f: V(\matr{G}) \longrightarrow V(\matr{H})$ such that $uv
\in E(\matr{G})$ implies $f(u)f(v) \in E(\matr{H})$.
If there exists a homomorphism from $\matr{G}$ to
$\matr{H}$ then we may simply write $\matr{G} \longrightarrow \matr{H}$. (For more on graph homomorphisms and their central role
in graph theory see \cite{HN04}.)

 Let $k$ and $d$ be positive integers such that $n \geq 2d$. Then the
{\em circular complete graph} $\matr{K}{_{\frac{n}{d}}}$ is the graph with
the vertex set $\{0, 1, \cdots, n-1\}$ in which $i$ is connected to
$ j$ if and only if $d \leq |i-j| \leq n-d$. A graph $\matr{G}$ is said to
be $(n,d)$-colourable if $\matr{G}$ admits a homomorphism to
$\matr{K}_{_{\frac{n}{d}}}$. The {\em circular chromatic number} $\chi{_{_c}}(\matr{G})$
of a graph $\matr{G}$ is the minimum of those ratios $\frac{n}{d}$ for
which $\gcd(n,d)=1$ and $\matr{G}$ admits a homomorphism to
$K_{_{\frac{n}{d}}}$ (see \cite{vince,ZHU01,ZHU06} for more on circular chromatic number and its properties).
Also, note that $\matr{C}_{_{2k+1}} \simeq \matr{K}_{_{\frac{2k+1}{k}}},$
and consequently $$\chi_{_c}(\matr{C}_{_{2k+1}})=2+\frac{1}{k}.$$

 For a graph $\matr{G}$ the graph $\matr{G}^{^{\frac{1}{d}}}$ is defined as the graph obtained from $\matr{G}$ by replacing each edge by a path of length $d$ (i.e. subdividing each edge by $d-1$ vertices). The $r$th power of a graph $\matr{G}$,
denoted by $\matr{G}^{r}$,
is a graph on the vertex set $V(\matr{G})$, in which two vertices are connected by an edge if there exist an $r$-walk between them in $\matr{G}$. Also, the fractional power of a graph is defined as
$$\matr{G}^{\frac{r}{d}}\isdef \left(\matr{G}^{\frac{1}{d}}\right)^{r}.$$
Note that for simple graphs, $\matr{G} \longrightarrow \matr{H}$ implies that $\matr{G}^{r} \longrightarrow \matr{H}^{r}$
for any positive integer $r > 0$. Hence, Problem~\ref{prb:pentagon} is closely related to the study of the chromatic number of the third power of sparse $3$-regular simple graphs (see \cite{DH08,haji10,haji11}).

 The Petersen graph is an icon in graph theory. The generalized Petersen graphs are introduced in \cite{cox}
and given the name along with a standard notation by M. Watkins in \cite{watkins}.
The generalized Petersen graph, $\matr{Pet}(n,k)$, is defined as follows.
\begin{defin}{\label{def:Petersen}
In Watkins' notation the generalized Petersen graph $\matr{Pet}(n,k)$ for positive integers $n$ and $k$, where $2< 2k\leq n$, is a graph on $2n$ vertices, defined as follows,
$$V(\matr{Pet}(n,k))\isdef \{u_{_0}, u_{_1}, \dots , u_{_{n-1}}\} \cup \{v_{_0}, v_{_1}, \dots , v_{_{n-1}}\},$$
$$E(\matr{Pet}(n,k))\isdef \left(\displaystyle{\bigcup_{i=0}^{n-1} \{u_{_i}u_{_{i+1}}\}}\right) \cup
\left(\displaystyle{\bigcup_{i=0}^{n-1} \{u_{_i}v_{_{i}}\} }\right) \cup
\left(\displaystyle{\bigcup_{i=0}^{n-1} \{v_{_i}v_{_{i+k}}\} }\right),$$
where $+$ stands for addition modulo $n$ (see Figure \ref{pic:watkins}).
}\end{defin}
\begin{figure}[ht]
\includegraphics[width=6.5cm]{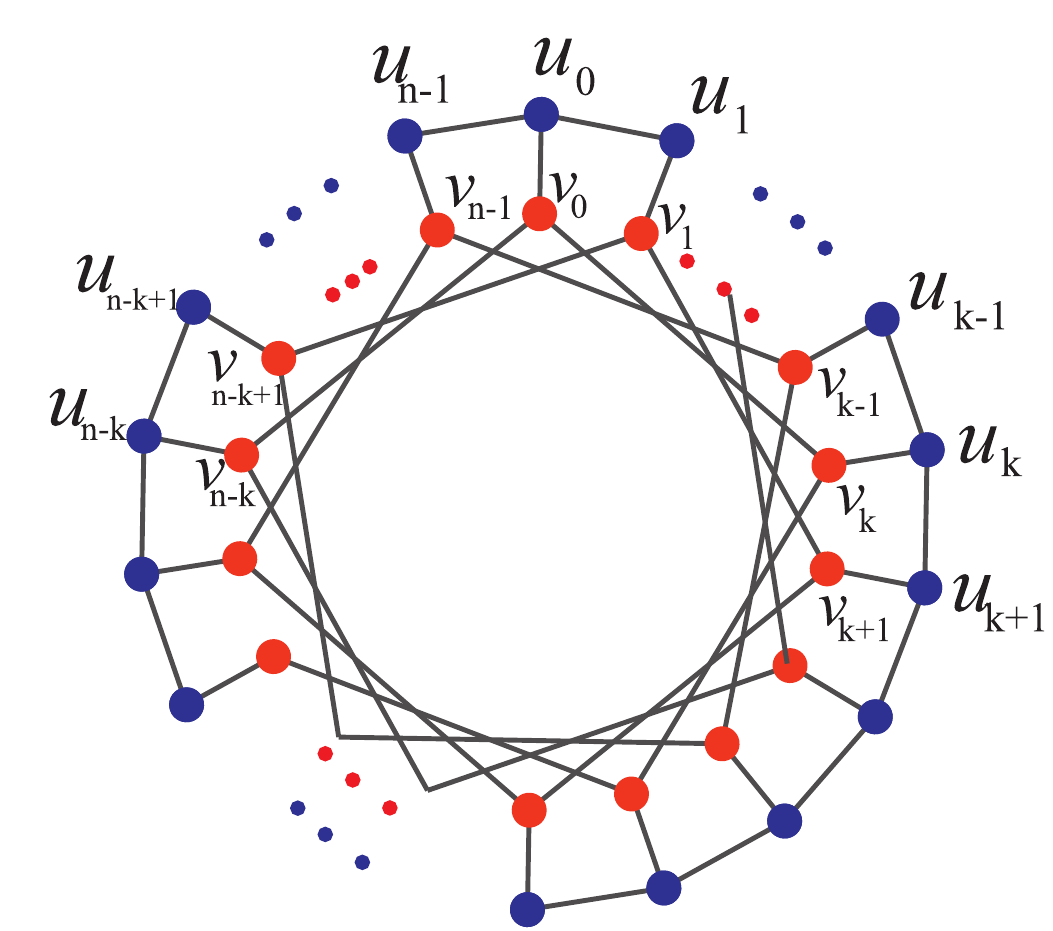}
\centering{\caption{Watkins' notation for generalized Petersen graph $\matr{Pet}(n,k)$.
}\label{pic:watkins}}
\end{figure}
Some basic properties of generalized Petersen graphs are as follows (e.g. see \cite{fox}).
\begin{itemize}
\item Except $\matr{Pet}(2k,k)$, all generalized Petersen graphs are $3$-regular.
\item $\matr{Pet}(n,k)$ is bipartite if and only if $n$ is even and $k$ is odd, otherwise $\matr{Pet}(n,k)$ is $3$-chromatic.
\item $\matr{Pet}(n,k)$ is isomorphic to $\matr{Pet}(n,m)$, if and only if either $m\not\stackrel{n}{\equiv}\pm k$ or $mk \stackrel{n}{\equiv} \pm 1$ \cite{iso}.
\item $\matr{Pet}(n,k)$ is vertex transitive if and only if $(n,k)=(10,2)$ or $k^2\stackrel{n}{\equiv} \pm 1$ \cite{frucht}.
\item $\matr{Pet}(n, k)$ is edge-transitive (see \cite{frucht}) if and only if 
$$(n, k)\in \{(4, 1), (5, 2), (8, 3), (10, 2), (10, 3), (12, 5), (24, 5)\}.$$
\item $\matr{Pet}(n, k)$ is a Cayley graph (see \cite{cayley}) if and only if $k^2\stackrel{n}{\equiv} 1$.
\item Every generalized Petersen graph is a unit distance graph \cite{unit} (a graph formed from a collection of points in the Euclidean plane by connecting two points by an edge whenever the distance between the two points is exactly one).
\item Some specially named generalized Petersen graphs are the {\it Petersen graph} $\matr{Pet}(5,2)$, the \textit{D$\ddot{u}$rer graph} $\matr{Pet}(6,2)$,
the \textit{M$\ddot{o}$bius-Kantor graph} $\matr{Pet}(8,3)$, the \textit{dodecahedron} $\matr{Pet}(10,2)$, the \textit{Desargues graph} $\matr{Pet}(10,3)$ and the \textit{Nauru graph} $\matr{Pet}(12,5)$.
\end{itemize}
We know when generalized Petersen graphs are Hamiltonian and we also know about their automorphism groups (the interested reader is referred to the exiting literature for more on these graphs).
\subsection{Main results}
\indent
Our main results in this article can be divided into two categories. The first category is concerned about the
odd girth of generalized Petersen graphs, discussed in Section~\ref{sec:oddgirth}, and the following theorem presents an explicit description of this parameter as
our first main result.
\begin{thm}{\label{thm:oddgirth}
Let $\mathbb{O}$ be the set of odd numbers, and define,
$$\matr{Ind}(n,k)=\left\lbrace
\begin{array}{ll} \lbrace t\ | \ t\ {\rm is\ odd},\ \ 0\leq t\leq \min \{\frac{2k}{\gcd(n,k)}, \lfloor\frac{(k-1)^2}{n}\rfloor+1 \}\rbrace & k\ {\rm and} \ n \ {\rm are \ odd},\\
\lbrace t\ | \ 0<t\leq \min \{\frac{2k}{\gcd(n,k)}, \lfloor\frac{(k-1)^2}{n}\rfloor\}\rbrace
& \frac{n}{\gcd(n,k)} \ {\rm is \ odd\ and}\ k\ {\rm is\ even},\\
\lbrace t\ | \ 0<t\leq \min \{\frac{k}{\gcd(n,k)}, \lfloor\frac{(k-1)^2}{n}\rfloor\}\rbrace & \frac{n}{\gcd(n,k)} \ {\rm and}\ k\ {\rm are\ even}.
\end{array} \right. $$
and
$${\cal G} \isdef \displaystyle{\bigcup_{t\in \matr{Ind}(n,k)}} \left\lbrace tn+(1-k)\lfloor \frac{tn}{k} \rfloor+2 , \ (1+k)\lceil \frac{tn}{k}\rceil -tn+2\right\rbrace.$$
Then we have
$$\go(\matr{Pet}(n,k))= \min \left(\left(\left\lbrace \frac{n}{\gcd(n,k)},\ k+3\right\rbrace \cup {\cal G}\right)
\cap \mathbb{O}\right).$$
}\end{thm}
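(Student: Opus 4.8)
The plan is to reduce the computation of $\go(\matr{Pet}(n,k))$ to a small integer program by encoding each cycle through three counters, and then to solve that program explicitly. First I would fix the three edge types---outer edges $u_iu_{i+1}$, spokes $u_iv_i$, and inner edges $v_iv_{i+k}$---and, for an arbitrary cycle $C$, record the number $E_o$ of outer edges, the number $E_i$ of inner edges, and the number $E_s$ of spokes it uses, so that its length is $E_o+E_i+E_s$. Reading the column index on the universal cover of the outer rim (lifting $\mathbb{Z}/n$ to $\mathbb{Z}$), the net displacement accumulated along $C$ must equal $tn$ for an integer winding number $t$; splitting it into its outer part $D_o$ (a sum of $\pm 1$'s) and its inner part $D_i=km$ (a sum of $\pm k$'s) gives the closure relation $D_o+km=tn$. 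Since $E_o\geq |D_o|=|tn-km|$, since $E_i\geq |m|$, and since any cycle meeting both vertex classes needs at least two spokes, every mixed cycle obeys the lower bound $|C|\geq |tn-km|+|m|+2$, while the two degenerate cases give the outer cycle of length $n$ and the inner cycles of length $n/\gcd(n,k)$; the outer cycle is then dominated, since when $n$ is odd the odd value $n/\gcd(n,k)$ is no larger, and when $n$ is even it contributes no odd cycle.

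Next I would turn this bound into the explicit candidate lengths. For fixed $t$, minimising $|tn-km|+|m|$ over the integer $m$ splits into the sub-cases $km\leq tn$ and $km\geq tn$; the optima $m=\lfloor tn/k\rfloor$ and $m=\lceil tn/k\rceil$ produce exactly the two values $tn+(1-k)\lfloor tn/k\rfloor+2$ and $(1+k)\lceil tn/k\rceil-tn+2$ appearing in ${\cal G}$, together with the basic short cycle of length $k+3$ built from one inner edge, $k$ outer edges and two spokes (the $t=0$, $m=1$ instance that the continuous optimum misses, which is why it is listed separately). To see that these lower bounds are attained I would give the monotone two-spoke construction: run $m$ inner edges in the $+k$ direction, close up with the short outer arc of length $|tn-km|<k$, and join the ends by two spokes. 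Because the inner arc lies on the $v$-vertices and the outer arc on the $u$-vertices they never cross, so the only thing to verify is that the inner arc is a simple path, i.e. that it does not wrap all the way around its inner cycle of length $n/\gcd(n,k)$.

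The parity bookkeeping then selects which candidates are odd, and this produces the three cases of $\matr{Ind}(n,k)$. Writing $tn=qk+r$ with $0\leq r<k$, the two candidate lengths reduce to $q+r+2$ and $q+(k-r)+3$, and $(q+r+2)-(q+k-r+3)\equiv k+1 \pmod 2$. Hence when $k$ is odd the two lengths have equal parity, and $q+r+2\equiv tn \pmod 2$ shows the length is odd exactly when $t$ and $n$ are both odd (giving no odd cycle when $n$ is even, consistent with bipartiteness); when $k$ is even exactly one of the two lengths is odd for each $t$. Combining this with whether $n/\gcd(n,k)$ is even or odd yields the three parity regimes and, in particular, the restriction to odd $t$ in the first case.

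The main obstacle will be pinning down the exact range of $t$, namely proving that the caps $\min\{2k/\gcd(n,k),\ \lfloor (k-1)^2/n\rfloor+1\}$ and $\min\{k/\gcd(n,k),\ \lfloor (k-1)^2/n\rfloor\}$ are simultaneously large enough that no shorter odd mixed cycle is omitted and small enough that every retained candidate is realised by a genuine simple cycle. The bound $k/\gcd(n,k)$ or $2k/\gcd(n,k)$ is the simplicity threshold ensuring that the monotone inner arc $m=\lfloor tn/k\rfloor$ does not close up into its full inner cycle, the factor two reflecting the parity of $n/\gcd(n,k)$; the bound $\lfloor (k-1)^2/n\rfloor$ (shifted by one in the all-odd case to account for the requirement that $t$ be odd) is the threshold beyond which the mixed length exceeds the competing basic candidates $k+3$ and $n/\gcd(n,k)$ and so cannot realise the minimum. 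Taking the minimum odd value over $\{n/\gcd(n,k),\ k+3\}\cup{\cal G}$ then gives the stated formula, and the delicate point throughout is the parity-sensitive verification that the retained candidates are precisely the realisable cycle lengths that can compete for the odd girth.
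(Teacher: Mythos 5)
Your skeleton is essentially the paper's: the encoding of a cycle by outer/inner/spoke counters with the closure relation $\oo + k\ii = tn$, the lower bound $|\matr{C}|\geq |tn-km|+|m|+2$ for mixed cycles, the per-$t$ optimization giving the floor/ceiling pair, and the parity split according to $k$ are all exactly the paper's Lemmas on cycle parameters and uniqueness together with its Theorem relating the program $(*)$ to the odd girth. The genuine gap sits precisely where you say the "main obstacle" is --- the caps on $t$ in $\matr{Ind}(n,k)$ --- and both mechanisms you propose for them are wrong. First, the cap $2k/\gcd(n,k)$ is \emph{not} a simplicity threshold: the monotone inner arc with $m=\lfloor tn/k\rfloor$ is simple only when $\lfloor tn/k\rfloor < n/\gcd(n,k)$, i.e.\ roughly $t<k/\gcd(n,k)$, so nothing is simple for $t$ between $k/\gcd(n,k)$ and $2k/\gcd(n,k)$, and the factor two cannot arise this way. (Simplicity is also a red herring: non-minimal candidates need only be lengths of closed odd walks, since any odd closed walk contains an odd cycle, so every odd candidate automatically bounds $\go(\matr{Pet}(n,k))$ from above; exact attainment of the minimum comes from the lower-bound direction, not from exhibiting simple cycles.) The correct mechanism, which the paper proves, is a periodicity identity: with $g=\gcd(n,k)$, if $t=\beta+\alpha k/g$ then each of the two candidate values at $t$ equals the corresponding value at $\beta$ plus $\alpha n/g$. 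When $n/g$ is even one period preserves parity and strictly increases the candidate, giving the cap $k/g$; when $n/g$ is odd one period flips parity, so domination among \emph{odd} candidates requires two periods, giving $2k/g$. Without this identity you cannot rule out that a smaller odd candidate lives at some $t$ beyond your cap.

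Second, the cap $\lfloor (k-1)^2/n\rfloor$ (plus one in the all-odd case) cannot be derived by comparing mixed candidates against $k+3$ and $n/\gcd(n,k)$ when $k$ is odd: there $k+3$ is even, hence not a competing odd candidate at all, and $n/\gcd(n,k)$ can be as large as $n$, which would only force $t$ to be at most about $k$ --- far weaker than $\lfloor (k-1)^2/n\rfloor+1$. The paper instead bounds the winding number of an \emph{optimal} solution directly: for $k,n$ odd it compares with the explicit $t=1$ feasible point $(\oo,\ii)=(n-k\lfloor n/k\rfloor,\ \lfloor n/k\rfloor)$, obtaining $\oo+|\ii|< \frac{n}{k}+k-1$ and then $|t|n\leq \oo+k|\ii|< n+(k-1)^2$; for $k$ even it compares with $(\oo,\ii_{_+},\ii_{_-})=(k,0,1)$, uses the parity of the objective to get $\oo+|\ii|\leq k-1$ and $|\ii|\leq k-2$, hence $|t|n\leq (k-1)^2$. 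An argument of this type (bounding $|t|$ for the minimizer, rather than comparing candidate values against the two basic cycles) is what makes the finite candidate set legitimate; with it, and with the periodicity identity above, your outline closes up into the paper's proof.
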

Our main strategy to obtain the above formulation is based on the key observation that
the problem of finding the odd girth of $\matr{Pet}(n,k)$ can be reduced to finding the solutions of the following integer program,
$$
\begin{array}{llll}
\min & & \oo+ \ii_{_+}+\ii_{_-} &\\
& & \oo + k(\ii_{_+}-\ii_{_-}) & =tn, \\
& &\oo+\ii_{_+}+\ii_{_-} &=2r+1, \\
& &r, \oo, \ii_{_+}, \ii_{_-}& \geq 0,\\
& &t \in \mathbb{Z}.&
\end{array}\qquad (*)
$$
We will see that the optimization problem $(*)$ may have some trivial solutions. In this regard, a solution
$(\oo, \ii_{_+}, \ii_{_-})$
of $(*)$ is said to be a {\it trivial} solution if either $\oo=0$ or $ \ii_{_+}+ \ii_{_-}=0$. Evidently, other solutions are referred to as {\it nontrivial} solutions.
Moreover, note that $r$ and $t$ are uniquely determined by a solution $(\oo, \ii_{_+}, \ii_{_-})$. Hence, sometimes, when there is no ambiguity, we may talk about a solution $(\oo, \ii_{_+}, \ii_{_-})$ or a solution $(\oo, \ii_{_+}, \ii_{_-},r,t)$, when we want to explicitly refer to parameters $r$ and $t$.
We may also say that $(\oo, \ii_{_+}, \ii_{_-},r,t)$ is {\it feasible} if the parameters satisfy conditions of $(*)$.
Note that $(*)$ has a solution if and only if its {\it feasible set} is non-empty.

 Using Theorem~\ref{thm:oddgirth} we may discuss some asymptotic properties of the odd girth of generalized Petersen graphs.
In this regard we prove,
\begin{thm}{\label{thm:girthbounds}
The odd girth of a non-bipartite generalized Petersen graph, $\matr{Pet}(n,k)$, is either equal to $k+3$ or satisfies the following inequality
$$\max\left(\displaystyle{\frac{n}{k}},\left(\min\{\gcd(n,k-1),\gcd(n,k+1)\}+2\right)\right) \leq \go(\matr{Pet}(n,k)) \leq \displaystyle{\frac{n}{k}}par(k)+k+1,$$
where $par(k)$ is the parity function which is equal to one when $k$ is odd and is
zero otherwise.
}\end{thm}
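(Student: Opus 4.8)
The plan is to read the two bounds directly off the explicit formula of Theorem~\ref{thm:oddgirth}. Write $g=\gcd(n,k)$, $g_{-}=\gcd(n,k-1)$ and $g_{+}=\gcd(n,k+1)$, so that $\go(\matr{Pet}(n,k))$ is the smallest odd number among $k+3$, the inner-cycle length $n/g$, and the members of ${\cal G}$. Since the value $k+3$ is allowed as a separate case, I only need to control the inner-cycle candidate $n/g$ and, for each admissible $t\geq 1$, the two numbers of the $t$-th term of ${\cal G}$, which upon substituting $tn=k\lfloor tn/k\rfloor+(tn\bmod k)$ take the symmetric form
$$F(t)=\lfloor tn/k\rfloor+(tn\bmod k)+2,\qquad S(t)=\lceil tn/k\rceil+((-tn)\bmod k)+2 .$$
The point to keep in mind throughout is that each of $F(t),S(t)$ is a value $\oo+\ii_{_+}+\ii_{_-}+2$ coming from a feasible solution of $(*)$ with the corresponding $t\geq 1$ (a cycle through at least two spokes), whereas $n/g$ is the spoke-free inner cycle and carries no additional $+2$.

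For the upper bound I would split on the parity of $k$. If $k$ is even then $\matr{Pet}(n,k)$ is automatically non-bipartite and $k+3$ is odd, hence an admissible candidate, so $\go(\matr{Pet}(n,k))\leq k+3$; an odd girth different from $k+3$ is then at most $k+1=\frac{n}{k}par(k)+k+1$. If $k$ is odd, non-bipartiteness forces $n$ odd, so $1\in\matr{Ind}(n,k)$ and, because $k\lfloor n/k\rfloor+(n\bmod k)=n$ with $k$ odd, the number $F(1)=\lfloor n/k\rfloor+(n\bmod k)+2$ has the same parity as $n$ and is therefore odd. Thus $F(1)$ is an admissible odd candidate and $\go(\matr{Pet}(n,k))\leq F(1)\leq\lfloor n/k\rfloor+(k-1)+2\leq\frac{n}{k}+k+1$, which is the claimed bound since $par(k)=1$.

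For the lower bound I would treat the two entries of the maximum separately. The estimate $\go(\matr{Pet}(n,k))\geq n/k$ is immediate once the odd girth is not $k+3$: one has $n/g\geq n/k$ because $g\leq k$, while $F(t)\geq\lfloor n/k\rfloor+2>n/k$ and $S(t)\geq\lceil n/k\rceil+2>n/k$ for every $t\geq 1$. For the other entry I would reduce the equality $\oo+k(\ii_{_+}-\ii_{_-})=tn$ of $(*)$ modulo $k-1$ and modulo $k+1$. Since $g_{-}\mid k-1$, $g_{+}\mid k+1$ and $g_{\pm}\mid n$, this yields $g_{-}\mid(\oo+\ii_{_+}-\ii_{_-})$ and $g_{+}\mid(\oo-\ii_{_+}+\ii_{_-})$. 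As $\oo,\ii_{_+},\ii_{_-}\geq 0$, the objective dominates both $|\oo+\ii_{_+}-\ii_{_-}|$ and $|\oo-\ii_{_+}+\ii_{_-}|$, and for a solution with $t\geq 1$ these two quantities cannot vanish simultaneously (that would force $\oo=0$ and $\ii_{_+}=\ii_{_-}$, whence $t=0$). Hence at least one of them is a nonzero multiple of the corresponding $g_{\pm}$, so $F(t),S(t)\geq\min\{g_{-},g_{+}\}+2$.

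The main obstacle, and the only place where the non-bipartite hypothesis is genuinely needed, is the spoke-free candidate $n/g$, for which the modular argument gives only $n/g\geq\max\{g_{-},g_{+}\}$ and I must still bridge a gap of $2$. Here I would use the number theory of the three gcd's: $g$ is coprime to each of $g_{-},g_{+}$, and $\gcd(g_{-},g_{+})\mid\gcd(k-1,k+1)\mid 2$, where the value $2$ would force $k$ odd and $n$ even, i.e. a bipartite graph. Thus in the non-bipartite case $g,g_{-},g_{+}$ are pairwise coprime, so $g\,g_{-}g_{+}\mid n$ and $n/g\geq g_{-}g_{+}$. A one-line check shows $g_{-}g_{+}\geq\min\{g_{-},g_{+}\}+2$ except when $\{g_{-},g_{+}\}$ equals $\{1,2\}$ or $\{1,1\}$; the former again forces bipartiteness and is excluded, while in the latter $n/g\geq 3$ because $n/g=2$ would mean $n=2k$, the non-cubic exception. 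Combining the two lower estimates with the upper bound established above completes the argument.
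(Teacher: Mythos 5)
Your proposal is correct, and it takes a genuinely different (though sanctioned) route: the paper, despite remarking that the theorem is ``essentially a direct consequence of Theorem~\ref{thm:oddgirth}'', actually gives a direct proof from the integer program, whereas you carry out the consequence-of-the-formula route. Concretely, the paper gets the upper bound from feasibility of $(\oo,\ii_{_+},\ii_{_-})=(k,0,1)$ when $k$ is even and from the $t=1$ feasible point of Lemma~\ref{lem:uniq} when $k$ is odd; it gets the bound $n/k$ by showing any solution of $(*)$ with objective below $n/k$ forces $t=0$ and then a parity or Lemma~\ref{lem:basic}(e) contradiction; and it gets the gcd bound by writing the objective of $(**)$ as $tn-(k\mp 1)\ii$, a positive multiple of $\gcd(n,k\mp 1)$. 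Your candidate-by-candidate analysis of $n/\gcd(n,k)$, $F(t)$, $S(t)$ proves the same estimates, but it buys something real: the paper's divisibility argument literally concludes only ``the odd girth is at least $\min\{\gcd(n,k-1),\gcd(n,k+1)\}$'' when $\ii\neq 0$; the missing $+2$ is automatic only when the minimizer is nontrivial (odd girth $=$ objective $+2$ by Theorem~\ref{thm:main}), and the trivial inner-cycle case $\oo=0$, where the odd girth \emph{equals} $n/\gcd(n,k)$, is left unaddressed. Your pairwise-coprimality argument ($g\,g_{-}g_{+}\mid n$, with $g_{-},g_{+}$ both odd in the non-bipartite case, so $n/g\geq g_{-}g_{+}$) closes exactly this gap; an equivalent local fix is to note that both $g_{-}$ and $g_{+}$ divide $n/g$. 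Two small blemishes in your write-up, neither fatal: (i) $n=2k$ is \emph{not} excluded by Definition~\ref{def:Petersen} (the paper allows $\matr{Pet}(2k,k)$), but this is harmless since $n/g=2$ is even and therefore never an odd candidate in the minimum; (ii) the non-bipartite hypothesis is not needed ``only'' for the spoke-free candidate --- you also invoke it in the upper bound, where $k$ odd forces $n$ odd so that $1\in\matr{Ind}(n,k)$ and $F(1)$ is odd.
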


 Note that by Theorem~\ref{thm:girthbounds} the odd girth of $\matr{Pet}(n,k)$ tends to infinity either if both
$\frac{n}{k}$ and $k+3$ tend to infinity, or if $\min\{\gcd(n,k-1), \gcd(n,k+1)\}$ tends to infinity.
However, the converse may not be true.

For instance, for fixed and odd $k$, the odd girth of $\matr{Pet}(jk,k)$
tends to infinity, when $j$ is odd and tends to infinity. On the other hand, for a fixed even integer $j$, 
the odd girth of $\matr{Pet}(jk,k)$ by Theorem~\ref{thm:oddgirth} is equal to $k+3$ and tends to infinity, when $k$ is even and tends to infinity. Note that in this case $\frac{n}{k}$ is fixed.
Both of these observations show that, $\go(\matr{Pet}(n,k))$ may tend to infinity while one of the parameters
$\frac{n}{k}$ or $k$ is fixed.

Moreover, note that for $n=(k-1)(k+1)+1$, if $k$ tends to infinity, the odd girth $\go(\matr{Pet}(n,k))$ tends to infinity
since both $k$ and $\frac{n}{k}$ tend to infinity, while in this case  $\min\{\gcd(n,k-1),\gcd(n,k+1)\}=1$ is fixed.

 It is easy to verify that in Watkins' notation $u_{_{0}}u_{_{1}}v_{_{1}}v_{_{k+1}}u_{_{k+1}}u_{_{k}}v_{_{k}}v_{_{0}}u_{_{0}}$ is a closed walk in $\matr{Pet}(n,k)$, implying that the girth of generalized Petersen graphs are always less than or equal to $8$. On the other hand, by Theorem~\ref{thm:girthbounds} generalized Petersen graphs are odd-girth closed.

 \begin{cor}
The class of generalized Petersen graphs are odd-girth closed but not girth-closed.
\end{cor}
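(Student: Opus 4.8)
The plan is to prove the two assertions separately; both follow quickly from material already in the excerpt, so the work is largely bookkeeping about which family of graphs to use.

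For the \emph{not girth-closed} half I would simply invoke the observation made immediately before the statement: the eight-edge closed walk $u_{_0}u_{_1}v_{_1}v_{_{k+1}}u_{_{k+1}}u_{_k}v_{_k}v_{_0}u_{_0}$ occurs in every $\matr{Pet}(n,k)$, whence the girth of every generalized Petersen graph is at most $8$. Granting this, the argument is a single line: taking $g=9$, there is \emph{no} member of the class with girth at least $9$, so by definition the class is not girth-closed. (If one wishes to be self-contained about the cited bound, one notes that the displayed walk performs no immediate backtracking, so after any degenerate vertex coincidences it still reduces to a nontrivial closed walk of length at most $8$ and therefore contains a cycle of length at most $8$; but since the bound is asserted just above, I would simply cite it.)

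For the \emph{odd-girth-closed} half I would first distill a uniform lower bound from Theorem~\ref{thm:girthbounds}. That theorem asserts that for a non-bipartite $\matr{Pet}(n,k)$ the odd girth is \emph{either} equal to $k+3$ \emph{or} bounded below by a maximum that is in particular $\geq \frac{n}{k}$; in both branches one concludes
$$\go(\matr{Pet}(n,k)) \ \geq\ \min\!\left(k+3,\ \tfrac{n}{k}\right).$$
The problem then reduces to exhibiting non-bipartite members for which both $k$ and $\frac{n}{k}$ grow without bound. The cleanest choice is the family $\matr{Pet}(k^{2},k)$ for $k\geq 3$: here $\frac{n}{k}=k$, so the displayed inequality yields $\go(\matr{Pet}(k^{2},k))\geq \min(k+3,k)=k$. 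This graph is never bipartite, since bipartiteness of $\matr{Pet}(n,k)$ requires $n$ even together with $k$ odd, whereas $n=k^{2}$ is odd whenever $k$ is odd and $k$ is even whenever $n$ is even; and it is genuinely $3$-regular for $k\geq 3$ because $k^{2}\neq 2k$, so that Theorem~\ref{thm:girthbounds} genuinely applies. Hence, given any $g$, one takes $k\geq\max(g,3)$ and obtains a member of the class with odd girth at least $g$.

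Putting the two halves together gives the corollary. I do not anticipate a genuine obstacle, since all the analytic content is carried by Theorem~\ref{thm:girthbounds} and the eight-edge walk. The only points that require care are (i) checking that the chosen family $\matr{Pet}(k^{2},k)$ is simultaneously non-bipartite and non-degenerate, so that Theorem~\ref{thm:girthbounds} is applicable, and (ii) making explicit that, because odd girth is informative only for graphs that actually possess odd cycles, odd-girth-closedness is witnessed here by non-bipartite graphs of large \emph{finite} odd girth rather than vacuously by bipartite members.
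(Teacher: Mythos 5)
Your proposal is correct and follows essentially the same route as the paper: girth $\leq 8$ from the displayed eight-edge closed walk, and odd-girth-closedness from the lower bound of Theorem~\ref{thm:girthbounds}, which forces $\go(\matr{Pet}(n,k)) \geq \min\left(k+3, \frac{n}{k}\right)$ so that any family with both $k$ and $\frac{n}{k}$ unbounded suffices. The only difference is cosmetic: the paper leaves the witness family implicit in its remarks following Theorem~\ref{thm:girthbounds}, while you instantiate it concretely with $\matr{Pet}(k^{2},k)$ and verify non-bipartiteness and non-degeneracy, which is a sound (and slightly more self-contained) way to finish.
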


The second category of our results is concerned with
circular chromatic number of generalized Petersen graphs, discussed in Section~\ref{sec:circhrom}.
The key observation in this regard is the fact that the information we already extract about the odd cycles of $\matr{Pet}(n,k)$
will help to prove bounds on the clique-number of some powers of these graphs, or helps to prove existence and nonexistence results on homomorphisms to powers of cycles, which in turn will give rise to
some bounds on the circular chromatic number of generalized Petersen graphs.

 Our first result in this regard is the following theorem.
\begin{thm}{\label{thm:homk2ko} \
\begin{itemize}
\item[{\rm (a)}]
Suppose that the system $(*)$ has no trivial solution. Then, if $(\oo, \ii_{_+},\ii_{_-},r)$ is a (non-trivial) solution, then
$$\matr{K}_{_{4r+2}} \rightarrow \matr{Pet}(n,k)^{^{2r+1}}.$$
\item[{\rm (b)}]
Let $\overline{\matr{K}}_{_{\frac{2n}{4k+4}}}$ be the complement of the circular complete graph $\matr{K}_{_{\frac{2n}{4k+4}}}$. Then, for $n>2k$, we have
$$\overline{\matr{K}}_{_{\frac{2n}{4k+4}}} \rightarrow \matr{Pet}(n,2k)^{^{2k+1}}.$$
\end{itemize}
}\end{thm}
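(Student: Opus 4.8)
The plan is to read every feasible point of $(*)$ as an odd closed walk in $\matr{Pet}(n,k)$ and then to manufacture, from the geometry of that walk, a clique of the prescribed size in the exact--walk power. Indeed, a feasible $(\oo,\ii_{_+},\ii_{_-},r,t)$ encodes a closed walk made of an outer arc $u_{_0}u_{_1}\cdots u_{_\oo}$ of length $\oo$, the spoke $u_{_\oo}v_{_\oo}$, an inner arc running from $v_{_\oo}$ back to $v_{_0}$ with $\ii_{_+}$ forward and $\ii_{_-}$ backward $k$--steps (net inner displacement $k(\ii_{_+}-\ii_{_-})=tn-\oo$), and the closing spoke $v_{_0}u_{_0}$; its total length is $(\oo+\ii_{_+}+\ii_{_-})+2=2r+3$. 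The hypothesis that $(*)$ has no trivial solution forces $\oo>0$ and $\ii_{_+}+\ii_{_-}>0$, so both arcs are genuinely present and the walk is an honest odd cycle $\matr{C}_{2r+3}$. The first (easy) observation is that any two vertices of this cycle are joined by a walk of length exactly $2r+1$ (take the short way, of the correct parity since $2r+3$ is odd, and pad by back--and--forth steps), which already yields a $\matr{K}_{2r+3}\rightarrow\matr{Pet}(n,k)^{2r+1}$. The content of part (a) is to enlarge this to $\matr{K}_{4r+2}$.

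For part (a) the structural fact I would exploit is that the two ends of the two arcs are joined by two internally disjoint routes of \emph{opposite parity}: the outer arc (length $\oo$) and the spoke--inner--spoke detour (length $\ii_{_+}+\ii_{_-}+2$), whose lengths differ by the odd number $2\oo-(2r+3)$. This parity flexibility is exactly what allows a prescribed index--displacement to be realised by a walk of a prescribed (here odd) length. A natural candidate for the $4r+2$ vertices is the pair of parallel families $\{u_{_i}\}$ and $\{v_{_i}\}$ ranging over a window of $2r+1$ consecutive indices, giving $2(2r+1)=4r+2$ vertices; one must check they are distinct (for which a bound of the form $2r+1\le n$ is needed) and, for every pair, exhibit a walk of length exactly $2r+1$. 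Odd index--gaps are handled along the outer or inner arcs and the spokes; the hard cases are the even gaps, where an odd--length realisation must route through an inner $k$--jump and invoke the Diophantine content of $(*)$, namely solvability of $\oo+k(\ii_{_+}-\ii_{_-})\equiv(\text{target})\pmod n$ inside the length budget, together with the parity change afforded by reduction modulo $n$ when $n$ is odd.

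For part (b) I would argue by an explicit homomorphism. Writing $\tfrac{2n}{4k+4}=\tfrac{n}{2k+2}$, the graph $\overline{\matr{K}}_{\frac{2n}{4k+4}}$ is the graph on $\mathbb{Z}_{2n}$ in which $a\sim b$ iff their circular distance is at most $4k+3=2(2k+1)+1$. Define $\phi:\mathbb{Z}_{2n}\to V(\matr{Pet}(n,2k))$ by $\phi(2q)=u_{_q}$ and $\phi(2q+1)=v_{_q}$ for $q\in\mathbb{Z}_{n}$, and verify that labels within circular distance $4k+3$ are sent to vertices joined by a walk of length exactly $2k+1$. The verification is clean precisely because the inner step $\pm 2k$ is even: a spoke costs one edge and toggles the $u/v$ type, an inner step costs one edge and shifts the index by $2k$, and the odd cycle $u_{_0}v_{_0}v_{_{2k}}u_{_{2k}}u_{_{2k-1}}\cdots u_{_1}u_{_0}$ of length $2k+3$ supplies parity switches. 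For instance $\phi(2q)=u_{_q}$ and $\phi(2q+1)=v_{_q}$ are spoke--adjacent, while $\phi(2q+1)=v_{_q}$ and $\phi(2q+2)=u_{_{q+1}}$ are joined by $v_{_q}\to v_{_{q+2k}}\to u_{_{q+2k}}\to\cdots\to u_{_{q+1}}$ of length $1+1+(2k-1)=2k+1$; in general one produces a connecting walk of the right parity and length at most $2k+1$ and pads it up, the constraint $n>2k$ guaranteeing that the indices stay distinct and that the $\pm 2k$ jumps do not wrap around destructively.

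The principal obstacle in both parts is \emph{exact} length control. Because an edge of $\matr{Pet}(n,k)^{\ell}$ requires a walk of length exactly $\ell$ rather than at most $\ell$, for each pair one must exhibit a connecting walk whose length is congruent to $\ell$ modulo $2$ and does not exceed $\ell$, after which padding by back--and--forth steps makes it exactly $\ell$. The parity requirement is met by the coexistence of the two opposite--parity routes issuing from the odd cycle; the genuinely delicate, case--laden part is the upper bound on the length of the correct--parity route. I expect the crux of (a) to be showing that for \emph{every} admissible index--difference within the chosen window this bound holds, which forces one to unwind the arithmetic of $(*)$ and to treat separately the boundary regimes of small $\oo$ or small $\ii_{_+}+\ii_{_-}$, and (for $n$ odd) the wrap--around regime; for (b) the analogous bookkeeping is lighter thanks to the evenness of $2k$ and the short $(2k+3)$--cycle.
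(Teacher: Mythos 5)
Your part (a) is not just incomplete, it is built on a candidate set that is wrong. You propose taking the $4r+2$ vertices $\{u_{_i},v_{_i}\colon 0\le i\le 2r\}$, two parallel windows of \emph{consecutive} indices, and you defer the "boundary regimes of small $\oo$" to later bookkeeping; but in exactly that regime the claim fails and no bookkeeping can save it. Take $\matr{Pet}(13,3)$: the system $(*)$ has the unique minimizer $(\oo,\ii_{_+},\ii_{_-},r,t)=(1,4,0,2,1)$, which is nontrivial, so the hypotheses of Theorem~\ref{thm:homk2ko}(a) hold with $2r+1=5$, and your window contains $u_{_0}$ and $u_{_2}$. Decompose any $u_{_0}$--$u_{_2}$ walk into $a$ outer steps of net displacement $o$, $b$ inner steps of net displacement $3\iota$, and $c$ spokes. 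Since both endpoints are outer vertices, $c$ is even, and $a\equiv o$, $b\equiv\iota \pmod 2$, so the length $a+b+c$ has the parity of $o+\iota$, while $o+3\iota\equiv 2\pmod{13}$. If $\iota\neq 0$ then $c\geq 2$, so an odd walk of length at most $5$ would force $|o|+|\iota|\leq 3$, hence $|o+3\iota|\leq 9<11$, hence $o+3\iota=2$ exactly; but then $o+\iota=2-2\iota$ is even, the wrong parity. If $\iota=0$ the parity condition forces $o$ odd with $o\equiv 2\pmod{13}$, so $|o|\geq 11>5$. Thus $u_{_0}$ and $u_{_2}$ are \emph{not} adjacent in $\matr{Pet}(13,3)^{^{5}}$, and your set is not a clique. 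Your own Diophantine route (outer displacement $2-\oo$, inner displacement $-\ii$) has length $|2-\oo|+|\ii|+2=2r+3$ when $\oo=1$, over budget, and the computation above shows nothing else is available.

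The fix is to change the clique, not the parity bookkeeping (your observations about opposite-parity routes, padding to exact length, and the easy $\matr{K}_{_{2r+3}}$ clique are all correct). The paper's $4r+2$ vertices are the vertex set of the odd cycle $\matr{C}_{_0}=u_{_0}u_{_1}\cdots u_{_\oo}v_{_\oo}v_{_{\oo+k}}\cdots v_{_{\oo+(\ii-1)k}}v_{_0}u_{_0}$ \emph{together with the spoke-neighbours of its vertices}, i.e. $\{u_{_0},\dots,u_{_\oo}\}\cup\{v_{_0},\dots,v_{_\oo}\}\cup\{u_{_{\oo+k}},\dots,u_{_{\oo+(\ii-1)k}}\}\cup\{v_{_{\oo+k}},\dots,v_{_{\oo+(\ii-1)k}}\}$: the second half runs along the inner arc in jumps of $k$, never through consecutive indices, so pairs like $u_{_0},u_{_2}$ above never arise; adjacency of every pair is then obtained by applying Proposition~\ref{pro:power} to the rotated cycles $\matr{C}_{_i}$. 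For part (b), by contrast, your map $2q\mapsto u_{_q}$, $2q+1\mapsto v_{_q}$ is exactly the paper's re-ordering $x_{_0}=u_{_0},x_{_1}=v_{_0},\dots$, and your sample walks are correct; what you leave as "in general one produces a connecting walk" is precisely the case analysis that constitutes the paper's proof (done there via Proposition~\ref{pro:power} on the $(2k+3)$-cycle $u_{_0}\cdots u_{_{2k}}v_{_{2k}}v_{_0}u_{_0}$), so (b) is the right approach but unverified, while (a) needs the paper's clique or some other correct one.
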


 Using Theorem~\ref{thm:homk2ko} we will obtain the following corollary on lower bounds for the circular chromatic number of Petersen graphs.
\begin{cor}{\label{cor:compk}
\begin{itemize}
\item[{\rm (a)}]
Let $n$ and $k$ be odd and suppose that the system $(*)$ has no trivial solution.
If $(\oo, \ii_{_+},\ii_{_-},r)$ is a (non-trivial) solution of $(*)$, then
$$2+ \dfrac{4r}{4r^2+2r+1} \leq \chi_{_c}(\matr{Pet}(n, k)).$$
\item[{\rm (b)}]
If $\go(\matr{Pet}(n,2k))=2k+3$, we have
$$ \frac{2n(2k+1)}{2kn+\lfloor \frac{2n}{4k+2}\rfloor} \leq \chi_{_c}(\matr{Pet}(n,2k)).$$
\end{itemize}
}\end{cor}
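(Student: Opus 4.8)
The plan is to derive both inequalities from Theorem~\ref{thm:homk2ko}, using only the monotonicity of $\chi_{_c}$ under homomorphisms together with the fact, recorded in the preliminaries, that $\matr{G}\rightarrow\matr{H}$ implies $\matr{G}^{r}\rightarrow\matr{H}^{r}$. For part~(a) I argue by contraposition. Set $s=2r+1$ and suppose that $\chi_{_c}(\matr{Pet}(n,k))=\frac{p}{q}<2+\frac{4r}{4r^{2}+2r+1}$, so that $\matr{Pet}(n,k)\rightarrow\matr{K}_{_{\frac{p}{q}}}$. Raising to the $s$-th power and composing with the homomorphism $\matr{K}_{_{4r+2}}\rightarrow\matr{Pet}(n,k)^{^{s}}$ furnished by Theorem~\ref{thm:homk2ko}(a) produces
$$\matr{K}_{_{4r+2}}\ \rightarrow\ \matr{Pet}(n,k)^{^{s}}\ \rightarrow\ \matr{K}_{_{\frac{p}{q}}}^{^{s}}.$$
Thus $\matr{K}_{_{\frac{p}{q}}}^{^{s}}$ contains a clique on $4r+2$ vertices, forcing $\chi_{_c}(\matr{K}_{_{\frac{p}{q}}}^{^{s}})\geq 4r+2$. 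The whole argument therefore reduces to the arithmetic claim that $\chi_{_c}(\matr{K}_{_{\frac{p}{q}}}^{^{s}})<4r+2$ whenever $p/q$ is strictly below the stated threshold, which would contradict the previous display.

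The key computation is an explicit description of the exact power of a circular complete graph. On the vertices $\{0,1,\dots,p-1\}$ the graph $\matr{K}_{_{\frac{p}{q}}}$ joins $i$ and $j$ precisely when the residue of $i-j$ lies in $[q,p-q]$, so a walk of length $s$ realizes exactly the displacements in the arc $[sq,\,s(p-q)]\pmod{p}$. I would analyse this arc through $m=\lfloor sq/p\rfloor$. Writing $p=2q+\delta$, when $\delta$ is small (the regime of ratios near $2$) one has $m=r$ because $s=2r+1$, the arc has length $s\delta<p$, and it is symmetric about $p/2$; hence it reduces to the circular complete graph $\matr{K}_{_{\frac{p}{q'}}}$ with $q'=sq-rp=q-r\delta$, the two arcs sharing the common length $p-2q'=s\delta$. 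Consequently $\chi_{_c}(\matr{K}_{_{\frac{p}{q}}}^{^{s}})=p/q'$, and the requirement $p/q'\geq 4r+2$ rearranges to $(4r^{2}+2r+1)\,\delta\geq 4rq$, i.e. $\delta/q\geq\frac{4r}{4r^{2}+2r+1}$. Since $p/q=2+\delta/q$, this is exactly the asserted bound, with equality at $(p,q)=(2s^{2},\,s^{2}-s+1)$. The main obstacle is the bookkeeping confirming that this is the binding regime near the threshold, ruling out both that the arc wraps further (making $\matr{K}_{_{\frac{p}{q}}}^{^{s}}$ complete) and that it collapses to a short-distance power of a cycle; the odd parity of $s$ is exactly what pins $m$ to $r$ here.

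For part~(b) I run the same scheme with $r=k$ and $s=2k+1$, now using the homomorphism $\overline{\matr{K}}_{_{\frac{2n}{4k+4}}}\rightarrow\matr{Pet}(n,2k)^{^{2k+1}}$ of Theorem~\ref{thm:homk2ko}(b). Under the assumption $\chi_{_c}(\matr{Pet}(n,2k))=p/q$ one obtains $\overline{\matr{K}}_{_{\frac{2n}{4k+4}}}\rightarrow\matr{K}_{_{\frac{p}{q}}}^{^{2k+1}}$, and hence $\chi_{_c}(\overline{\matr{K}}_{_{\frac{2n}{4k+4}}})\leq\chi_{_c}(\matr{K}_{_{\frac{p}{q}}}^{^{2k+1}})$. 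The source graph is the complement of the circular complete graph with edges at circular distance at least $4k+4$, that is, the graph joining the vertices of the cycle $\matr{C}_{_{2n}}$ that lie within distance $4k+3$; its circular chromatic number is an explicit rational in $n$ and $k$. Inserting the description of $\matr{K}_{_{\frac{p}{q}}}^{^{2k+1}}$ from the previous paragraph and solving the resulting inequality for $p/q$ yields the stated bound $\frac{2n(2k+1)}{2kn+\lfloor\frac{2n}{4k+2}\rfloor}$, the floor $\lfloor\frac{2n}{4k+2}\rfloor$ entering because $n$ is now a fixed integer, so that the relevant gap lengths and clique sizes must be rounded. The hypothesis $\go(\matr{Pet}(n,2k))=2k+3$ is used to certify that the homomorphism is realized by the extremal configuration attaining this ratio, so that the inequality is meaningful and as sharp as the method permits. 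As in part~(a), the one delicate point is the exact location of the arc swept out by the length-$(2k+1)$ walks modulo $p$, now further complicated by these rounding effects.
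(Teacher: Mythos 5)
You take a genuinely different route from the paper, and for part (a) it is essentially sound. The paper's proof is short because it black-boxes two results: since the absence of trivial solutions forces $\go(\matr{Pet}(n,k))=2r+3$ (Theorem~\ref{thm:main}), the adjunction of Theorem~\ref{lem:haji} turns the clique embedding $\matr{K}_{_{4r+2}}\rightarrow \matr{Pet}(n,k)^{^{2r+1}}$ of Theorem~\ref{thm:homk2ko}(a) into $\matr{K}_{_{4r+2}}^{^{\frac{1}{2r+1}}}\rightarrow \matr{Pet}(n,k)$, and the subdivision formula of Theorem~\ref{thm:haji} evaluates $\chi_{_c}(\matr{K}_{_{4r+2}}^{^{\frac{1}{2r+1}}})$ as exactly $2+\frac{4r}{4r^2+2r+1}$; part (b) is the same adjunction combined with Theorem~\ref{pro:zhu}, which supplies $\chi_{_c}(\overline{\matr{K}}_{_{p/q}})=p/\lfloor p/q\rfloor$. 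You instead stay entirely on the power side: contrapose, raise a hypothetical colouring $\matr{Pet}(n,k)\rightarrow \matr{K}_{_{p/q}}$ to the $(2r+1)$-st power, and identify $\matr{K}_{_{p/q}}^{^{2r+1}}$ with the circular clique $\matr{K}_{_{p/q'}}$, $q'=(2r+1)q-rp$. That identification is correct in the regime $r\delta<q$ (where $\delta=p-2q$), the regime does follow from the contrapositive assumption, and your rearrangement of $p/q'\geq 4r+2$ into $p/q\geq 2+\frac{4r}{4r^2+2r+1}$ is exact. What you gain is self-containedness (no appeal to \cite{haji10}, \cite{haji11}); what you pay is having to prove the power formula yourself.

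Two steps must be repaired, however. First, the preliminaries' claim that $\matr{G}\rightarrow\matr{H}$ implies $\matr{G}^{^{r}}\rightarrow\matr{H}^{^{r}}$ --- your only bridge from $\matr{Pet}(n,k)^{^{2r+1}}$ to $\matr{K}_{_{p/q}}^{^{2r+1}}$ --- is false in general: $\matr{C}_{_5}\rightarrow\matr{K}_{_3}$, yet $\matr{C}_{_5}^{^{3}}=\matr{K}_{_5}\nrightarrow\matr{K}_{_3}=\matr{K}_{_3}^{^{3}}$. For odd $r$ it holds only when $\go(\matr{H})>r$, since otherwise two adjacent vertices of $\matr{G}^{^{r}}$ can be identified onto a closed odd $r$-walk of $\matr{H}$. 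So you must verify $\go(\matr{K}_{_{p/q}})>2r+1$; this is equivalent to $p/q<2+\frac{1}{r}$, i.e.\ to $r\delta<q$, which is exactly your arc condition and is implied by the contrapositive hypothesis --- but it is the crux of the argument, not a free fact to be cited. Second, part (b) as described does not produce the stated bound: feeding $\overline{\matr{K}}_{_{\frac{2n}{4k+4}}}$ (Theorem~\ref{thm:homk2ko}(b) as stated) through your scheme gives $\chi_{_c}(\matr{Pet}(n,2k))\geq \frac{2n(2k+1)}{2kn+\lfloor\frac{2n}{4k+4}\rfloor}$, with floor $\lfloor\frac{2n}{4k+4}\rfloor$ rather than $\lfloor\frac{2n}{4k+2}\rfloor$; this is at least the stated bound, so the corollary still follows a fortiori, but your account of where the floor comes from is not right --- it enters through $\chi_{_c}(\overline{\matr{K}}_{_{p/q}})\geq p/\lfloor p/q\rfloor$, which itself needs an argument (vertex-transitivity plus independence number $\lfloor p/q\rfloor$, or Theorem~\ref{pro:zhu} as the paper uses). (The mismatch mirrors an inconsistency in the paper: the proof of Theorem~\ref{thm:homk2ko}(b) establishes adjacency only up to distance $4k+1$, i.e.\ the embedding of $\overline{\matr{K}}_{_{\frac{2n}{4k+2}}}$, and that is the version the paper's own proof of this corollary invokes.) Finally, the hypothesis $\go(\matr{Pet}(n,2k))=2k+3$ does not play the ``extremal configuration'' role you assign to it: in the paper it is precisely what licenses the adjunction (it guarantees $2k+1<\go$), whereas in your approach its job is done by the case split into $p/q\geq 2+\frac{1}{k}$ (where the bound is immediate, since the stated bound never exceeds $2+\frac{1}{k}$) and $p/q<2+\frac{1}{k}$ (where your power formula and girth check apply).
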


 A lower bound for circular chromatic number of Petersen graphs has already been introduced as follows.
\begin{alphthm}{{\rm \cite{ghebleh}}\label{pro:gheb} For any $n>2k$, we have
$$2+\frac{2}{2k+1} \leq \chi_{_c}(\matr{Pet}(n,2k)).$$
}\end{alphthm}

 In Proposition~\ref{pro:circular} (Section~\ref{sec:circhrom}) we will show that the lower bound
of Corollary~\ref{cor:compk}$(b)$ can be strictly greater
than the lower bound obtained in \cite{ghebleh}.

 Now, let us concentrate on the upper bounds. For this we define the graph $\matr{Pb}(n,k)$
on the cycle $\matr{C}_{_n}$ by connecting edges having distance $k$
(i.e. the graph is obtained by identifying vertices $v_{_i}$ and $u_{_i}$ in $\matr{Pet}(n,k)$).
Also, let $\matr{C}_{_n}^{^r}$ be the $r$-th power of $\matr{C}_{_n}$.
\begin{thm}{\label{thm:uphom}\
\begin{itemize}
\item[{\rm (a)}]
If $n$ is odd, $k$ is even, $n\stackrel{k-1}{\equiv} \pm 2$ and $s=\frac{(n-4)(k-2)}{2(k-1)}$, then there exists a homomorphism
$$\sigma : \matr{Pb}(n,k) \rightarrow \matr{K}_{_{\frac{n}{s}}}.$$
\item[{\rm (b)}]
For any generalized Petersen graph $\matr{Pet}(n,k)$, there exists a homomorphism
$$\sigma: \matr{Pet}(n,k) \rightarrow \matr{Pb}(n,k).$$
\item[{\rm (c)}] If $n$ and $k$ are odd and $n>2k+1$, there exists a homomorphism
$$\sigma:\ \matr{Pet}(n,k) \rightarrow \matr{C}_{_{n}}^{^k}.$$
\end{itemize}
}\end{thm}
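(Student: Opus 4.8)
The plan is to construct each of the three homomorphisms explicitly by writing down an appropriate labelling of vertices and then verifying that adjacent vertices receive adjacent labels. I treat the three parts separately, since they require different constructions.

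For part $(b)$, the homomorphism $\sigma: \matr{Pet}(n,k) \rightarrow \matr{Pb}(n,k)$ is essentially forced by the definition of $\matr{Pb}(n,k)$ as the identification of $v_{_i}$ with $u_{_i}$. I would set $\sigma(u_{_i}) = \sigma(v_{_i}) = i$ (viewing the vertex set of $\matr{Pb}(n,k)$ as $\cycl{n}$). One then checks the three edge types of Definition~\ref{def:Petersen}: the outer edges $u_{_i}u_{_{i+1}}$ map to consecutive vertices of $\matr{C}_{_n}$, the inner edges $v_{_i}v_{_{i+k}}$ map to the distance-$k$ edges that were added to build $\matr{Pb}(n,k)$, and each spoke $u_{_i}v_{_i}$ collapses to a single vertex. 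Since a homomorphism is permitted to send an edge to a single vertex only when the target has a loop, the one subtlety here is that the spoke edges are contracted; I would handle this by noting that $\matr{Pb}(n,k)$ should be understood with the identification absorbing the spokes, so that $\sigma$ respects all surviving edges. This part is the most routine.

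For part $(c)$, under the hypotheses $n,k$ odd and $n>2k+1$, I want a homomorphism $\sigma: \matr{Pet}(n,k) \rightarrow \matr{C}_{_{n}}^{^k}$. The natural idea is to route the two families of vertices onto the cyclic vertex set $\{0,1,\dots,n-1\}$ of $\matr{C}_{_n}^{^k}$ so that outer edges (which are length-$1$ steps) and inner edges (length-$k$ steps in index) both become edges of the $k$-th power, i.e. steps of absolute difference at most $k$ on $\matr{C}_{_n}$. I would try $\sigma(u_{_i}) = i$ and $\sigma(v_{_i}) = i + c \pmod n$ for a suitable shift $c$ chosen so that each spoke $u_{_i}v_{_i}$ (difference $c$) and each inner edge $v_{_i}v_{_{i+k}}$ (difference $c$ versus $c+k$, net difference $k$) land within distance $k$. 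The parity hypotheses and $n>2k+1$ should guarantee that such a $c$ exists and that the resulting differences stay in the admissible range $[1,k]$ modulo $n$; verifying the inner-edge condition is where the constraint $n>2k+1$ is used to prevent wraparound collisions.

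Part $(a)$ is the hard part. Here the target is the circular complete graph $\matr{K}_{_{\frac{n}{s}}}$ with $s=\frac{(n-4)(k-2)}{2(k-1)}$, and the source is $\matr{Pb}(n,k)$, so I must exhibit a labelling $\sigma: \cycl{n} \rightarrow \{0,\dots,n-1\}$ such that both the consecutive edges of $\matr{C}_{_n}$ and the distance-$k$ chords map to pairs $(a,b)$ with $s \leq |a-b| \leq n-s$. The main obstacle is that the value of $s$ is a delicate closed form dictated by the arithmetic conditions $n\stackrel{k-1}{\equiv}\pm 2$, and the construction of $\sigma$ must be tuned precisely to that congruence. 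I expect the labelling to be an affine map $i \mapsto \lambda i \pmod n$ for a carefully chosen multiplier $\lambda$ (a "jump" that spreads both edge types across the cycle), with $\lambda$ determined by the residue of $n$ modulo $k-1$; the bulk of the proof will be a case analysis on the sign in $n\stackrel{k-1}{\equiv}\pm 2$, together with floor/ceiling estimates showing that the induced label differences for both edge types meet the threshold $s$. The formula for $s$ strongly suggests that it arises as the minimum achievable gap over this family of affine labellings, so the verification reduces to checking that the extremal edge realizes exactly this gap while all others do no worse.
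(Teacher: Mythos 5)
Your part (c) is essentially the paper's own construction: the paper defines $\sigma(v_{_i})=\sigma(u_{_{i+1}})=x_{_i}$, which is your shift map with $c=1$ (note that since the paper defines the power $\matr{G}^{^{r}}$ via walks of length exactly $r$, a circular difference $d$ is an edge of $\matr{C}_{_{n}}^{^{k}}$ only if $d$ has the same parity as $k$, so the shift must be odd; $c=1$ works and this is precisely where $k$ odd is used). Part (b), however, is wrong as you wrote it. The map $\sigma(u_{_i})=\sigma(v_{_i})=i$ sends each spoke edge $u_{_i}v_{_i}$ to a single vertex; since $\matr{Pb}(n,k)$ is a loopless simple graph and the paper's definition of homomorphism requires $f(u)f(v)\in E(\matr{H})$ for every edge $uv$, this is not a homomorphism, and no reinterpretation of the quotient description of $\matr{Pb}(n,k)$ as ``absorbing the spokes'' can repair that: the theorem asserts a homomorphism in the standard sense, and the identification map is not one. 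The repair is exactly the shift trick you used in part (c): set $\sigma(v_{_i})=x_{_i}$ and $\sigma(u_{_i})=x_{_{i-1}}$; then spokes and outer edges both land on cycle edges of $\matr{Pb}(n,k)$, and inner edges land on the distance-$k$ chords. That is the paper's proof of (b).

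For part (a) you offer a strategy, not a proof, and what is missing is the entire content of the statement. You conjecture an affine labelling $i\mapsto \lambda i \pmod n$ but never produce $\lambda$ nor verify a single inequality; the remark that the formula for $s$ ``strongly suggests'' it is the minimal gap over affine labellings is speculation. The paper's argument (case $n\stackrel{k-1}{\equiv}-2$; the case $+2$ is handled by reversing orientation, as you anticipated) runs as follows: write $n+2=j(k-1)$; any common divisor of $n$ and $k-1$ divides $2$, hence equals $1$ since $n$ is odd, so $k-1$ is invertible modulo $n$ and one may define $\sigma(x_{_{i(k-1)}})=y_{_i}$, i.e. $\lambda=(k-1)^{-1}\bmod n$. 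The identities $(\tfrac{k}{2}j-1)(k-1)=\tfrac{k}{2}n+1\stackrel{n}{\equiv}1$ and $\tfrac{k}{2}j(k-1)=\tfrac{k}{2}n+k\stackrel{n}{\equiv}k$ show that every cycle edge of $\matr{Pb}(n,k)$ maps to a pair with circular difference $\tfrac{k(n+2)}{2(k-1)}-1$ and every distance-$k$ chord to a pair with difference $\tfrac{k(n+2)}{2(k-1)}$; it then remains to check the two inequalities $s\leq \tfrac{k(n+2)}{2(k-1)}-1$ and $\tfrac{k(n+2)}{2(k-1)}\leq n-s$, which is exactly where the value $s=\tfrac{(n-4)(k-2)}{2(k-1)}$ is needed. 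So your guess about the shape of the map is right, but without identifying $\lambda$ and carrying out this verification, part (a) remains unproven.
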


 Using Theorem~\ref{thm:uphom} we will obtain the following corollary on upper bounds for the circular chromatic number of generalized Petersen graphs.
\begin{cor}{\label{cor:upbound}\
\begin{itemize}
\item[{\rm (a)}]
Let $k$ be even, $n$ be odd and $n\stackrel{k-1}{\equiv}\pm 2$. Then
$$\chi_{_c}(\matr{Pet}(n,k))\leq \frac{2n(k-1)}{(n-4)(k-2)}.$$
\item[{\rm (b)}]
If $n$ and $k$ are odd and $n>2k+1$, then
$$\chi_{_c}(\matr{Pet}(n,k))\leq \frac{2n}{n-k}.$$
\end{itemize}
}\end{cor}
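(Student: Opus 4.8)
The plan is to obtain both inequalities directly from the homomorphisms produced by Theorem~\ref{thm:uphom}, combined with two standard properties of the circular chromatic number (see \cite{vince,ZHU01}): it is monotone under homomorphisms, so that $\matr{G}\rightarrow \matr{H}$ forces $\chi_{_c}(\matr{G})\leq \chi_{_c}(\matr{H})$, and $\chi_{_c}(\matr{K}_{_{\frac{p}{q}}})\leq \frac{p}{q}$ for every admissible pair with $p\geq 2q$. Thus each part reduces to composing the relevant homomorphisms and reading off the ratio of the target circular complete graph.

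For part $(a)$, note that the hypotheses $k$ even, $n$ odd and $n\stackrel{k-1}{\equiv}\pm 2$ are precisely those of Theorem~\ref{thm:uphom}$(a)$. Composing the homomorphism $\matr{Pet}(n,k)\rightarrow \matr{Pb}(n,k)$ of Theorem~\ref{thm:uphom}$(b)$ with $\matr{Pb}(n,k)\rightarrow \matr{K}_{_{\frac{n}{s}}}$, where $s=\frac{(n-4)(k-2)}{2(k-1)}$, yields $\matr{Pet}(n,k)\rightarrow \matr{K}_{_{\frac{n}{s}}}$. Monotonicity then gives $\chi_{_c}(\matr{Pet}(n,k))\leq \frac{n}{s}=\frac{2n(k-1)}{(n-4)(k-2)}$. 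I would also record the routine verification that $\matr{K}_{_{\frac{n}{s}}}$ is well defined, i.e. $1\leq s$ and $n\geq 2s$, the latter because $n(k-1)-(n-4)(k-2)=n+4k-8>0$.

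For part $(b)$, Theorem~\ref{thm:uphom}$(c)$ supplies $\matr{Pet}(n,k)\rightarrow \matr{C}_{_{n}}^{^{k}}$, so by monotonicity it is enough to prove $\chi_{_c}(\matr{C}_{_{n}}^{^{k}})\leq \frac{2n}{n-k}$. The crucial step is to describe $\matr{C}_{_{n}}^{^{k}}$ explicitly. Under the $r$-walk definition of the power, a vertex $i$ is adjacent to $j$ exactly when $j-i$ is congruent modulo $n$ to some $k-2b$ with $0\leq b\leq k$; as $n$ and $k$ are odd and $n>2k+1$, these residues are precisely the odd integers in $[-k,k]$ and are pairwise incongruent modulo $n$. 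Hence $\matr{C}_{_{n}}^{^{k}}$ is the circulant graph on $\mathbb{Z}_{n}$ with connection set $\{\pm 1,\pm 3,\ldots,\pm k\}$. I would then multiply all vertices by $2^{-1}\equiv \frac{n+1}{2}\pmod{n}$; this bijection sends $2t+1$ to the residue $\frac{n+2t+1}{2}$, whose cyclic distance to $0$ is $\frac{n-1}{2}-t$, so the connection set is carried bijectively onto the residues at cyclic distance between $\frac{n-k}{2}$ and $\frac{n-1}{2}$. This exhibits an isomorphism $\matr{C}_{_{n}}^{^{k}}\simeq \matr{K}_{_{\frac{n}{(n-k)/2}}}$, giving $\chi_{_c}(\matr{C}_{_{n}}^{^{k}})=\frac{2n}{n-k}$ and completing the argument; for the stated bound only the homomorphism into $\matr{K}_{_{\frac{n}{(n-k)/2}}}$ is needed.

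The main obstacle is the explicit identification of $\matr{C}_{_{n}}^{^{k}}$ in part $(b)$: one must confirm that a $k$-walk reaches exactly the odd residues in $[-k,k]$, using $n>2k+1$ to exclude wrap-around coincidences, and that multiplication by $2^{-1}$ maps $\{\pm 1,\pm 3,\ldots,\pm k\}$ onto $\{\pm\frac{n-k}{2},\ldots,\pm\frac{n-1}{2}\}$. Once this circulant isomorphism is established, both bounds are immediate consequences of the monotonicity of $\chi_{_c}$ under the homomorphisms of Theorem~\ref{thm:uphom}.
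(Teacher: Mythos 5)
Your proposal is correct and takes essentially the same route as the paper: part (a) is the identical composition $\matr{Pet}(n,k)\rightarrow \matr{Pb}(n,k)\rightarrow \matr{K}_{_{\frac{n}{s}}}$ followed by monotonicity of $\chi_{_c}$, and in part (b) your multiplication-by-$2^{-1}$ map coincides exactly with the paper's homomorphism $\eta$, which sends $x_{_i}$ to $y_{_{i/2}}$ for even $i$ and to $y_{_{(n+i)/2}}$ for odd $i$. The only difference is that you spell out the circulant identification of $\matr{C}_{_{n}}^{^{k}}$ and the image of its connection set, a verification the paper leaves to the reader.
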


 As a direct corollary one may conclude the following on odd-pentagonality of some subclasses
of the class of generalized Petersen graphs.
\begin{cor}{\label{cor:ogirthclosed}
Let ${\cal C}$ be the subclass of the class of generalized Petersen graphs for which at least one of the following conditions
hold.
\begin{itemize}
\item[{\rm (a)}] $\matr{Pet}(n,k)$, where $k$ is even, $n$ is odd and $n\stackrel{k-1}{\equiv}\pm 2$.
\item[{\rm (b)}] $\matr{Pet}(n,k)$, where both $n$ and $k$ are odd and $n\geq 5k$.
\end{itemize}
Then ${\cal C}$ is odd-pentagonal.
}\end{cor}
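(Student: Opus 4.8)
The plan is to reduce everything to the circular chromatic number via the standard equivalence $\matr{Pet}(n,k)\rightarrow \matr{C}_{_{5}}$ if and only if $\chi_{_c}(\matr{Pet}(n,k))\leq \frac{5}{2}$, which holds because $\matr{C}_{_{5}}\simeq \matr{K}_{_{\frac{5}{2}}}$ and $\matr{G}\rightarrow \matr{K}_{_{\frac{p}{q}}}$ is equivalent to $\chi_{_c}(\matr{G})\leq \frac{p}{q}$. Thus, to prove that ${\cal C}$ is odd-pentagonal, it suffices to (i) check that ${\cal C}$ is odd-girth-closed, and (ii) exhibit a threshold $g^*$ so that every $\matr{Pet}(n,k)\in {\cal C}$ with $\go(\matr{Pet}(n,k))>g^*$ satisfies $\chi_{_c}(\matr{Pet}(n,k))\leq \frac{5}{2}$. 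Both ingredients will be read off from Corollary~\ref{cor:upbound} together with the odd-girth estimates of Theorem~\ref{thm:girthbounds}; note also that conditions (a) and (b) are disjoint, as they require $k$ even and $k$ odd respectively.

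For odd-girth-closedness I would produce one explicit family inside ${\cal C}$ with unbounded odd girth. Taking $n=jk$ with $k\geq 3$ odd fixed and $j\geq 5$ odd, the graph $\matr{Pet}(jk,k)$ is non-bipartite, has $n=jk\geq 5k$, and hence lies in the subclass described by condition (b); by the lower bound $\frac{n}{k}=j\leq \go(\matr{Pet}(jk,k))$ from Theorem~\ref{thm:girthbounds}, its odd girth tends to infinity as $j\to\infty$. (Alternatively, one may use condition (a): fix a large even $k$ and take $n=2+m(k-1)$ with $m$ odd and large, so that $n$ is odd, $n\stackrel{k-1}{\equiv}2$, and $\frac{n}{k}>k+1$ forces $\go(\matr{Pet}(n,k))=k+3$ by the dichotomy of Theorem~\ref{thm:girthbounds}.) Either family shows ${\cal C}$ is odd-girth-closed.

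For the threshold, a graph of ${\cal C}$ under condition (b) always has $n\geq 5k$, and the elementary equivalence $\frac{2n}{n-k}\leq \frac{5}{2}\Longleftrightarrow n\geq 5k$ shows, via Corollary~\ref{cor:upbound}(b), that $\chi_{_c}(\matr{Pet}(n,k))\leq \frac{5}{2}$ unconditionally; such graphs are $\matr{C}_{_{5}}$-colourable no matter what their odd girth is. For a graph under condition (a), $k$ is even, so $par(k)=0$ and Theorem~\ref{thm:girthbounds} caps the odd girth at $k+3$; hence $\go(\matr{Pet}(n,k))>g^*$ forces $k\geq 16$ once $g^*=18$. I then feed this into Corollary~\ref{cor:upbound}(a): the bound $\frac{2n(k-1)}{(n-4)(k-2)}$ is decreasing in $n$, so using $n\geq 2k$ it is at most $\frac{2k(k-1)}{(k-2)^2}$, and a short computation reduces $\frac{2k(k-1)}{(k-2)^2}\leq \frac{5}{2}$ to $k^2-16k+20\geq 0$, which holds for every even $k\geq 16$. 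Therefore $g^*=18$ works uniformly: any graph of ${\cal C}$ with odd girth exceeding $18$ is $\matr{C}_{_{5}}$-colourable, and ${\cal C}$ is odd-pentagonal.

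The only delicate point, and the one I would highlight, is the coupling in case (a): since $k$ is even its odd girth is bounded by $k+3$, so demanding large odd girth is precisely what pushes $k$ past the value at which the upper bound of Corollary~\ref{cor:upbound}(a) drops below $\frac{5}{2}$. The monotonicity of that bound in $n$—which lets me replace $n$ by its extreme admissible value $2k$—is what makes a single numerical threshold suffice across the whole subclass; case (b) contributes no real constraint, since the hypothesis $n\geq 5k$ is exactly the condition guaranteeing $\chi_{_c}\leq \frac{5}{2}$.
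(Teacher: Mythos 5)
Your proof is correct and takes essentially the same route as the paper: case (b) is handled unconditionally by Corollary~\ref{cor:upbound}(b) via the equivalence $n\geq 5k \Longleftrightarrow \frac{2n}{n-k}\leq \frac{5}{2}$, and in case (a) the cap $\go(\matr{Pet}(n,k))\leq k+3$ for even $k$ from Theorem~\ref{thm:girthbounds} forces $k$ to be large, after which the bound of Corollary~\ref{cor:upbound}(a) drops below $\frac{5}{2}$. Your write-up is merely more explicit than the paper's asymptotic phrasing---you extract a concrete threshold $g^*=18$ and verify odd-girth-closedness of ${\cal C}$ directly, both of which the paper leaves implicit.
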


 It is instructive to note that by results of \cite{iso} $\matr{Pet}(n,k)$ is isomorphic to $\matr{Pet}(n,m)$, if and only if either $m\not\stackrel{n}{\equiv}\pm k$ or $mk \stackrel{n}{\equiv} \pm 1$. Hence, one may construct larger
odd-pentagonal subclasses of the class of generalized Petersen graphs. Also, note that this study motivates the interesting problem of characterizing conditions under which we have $$\matr{Pet}(n,k) \longrightarrow \matr{Pet}(n',k').$$

 \section{Odd girth of Generalized Petersen Graph}\label{sec:oddgirth}
In this section, we will prove Theorems~\ref{thm:oddgirth} and \ref{thm:girthbounds} by analyzing the solutions of the integer program $(*)$.
To start, let $\matr{C}$ be a cycle of length $\ell$ in $\matr{Pet}(n,k)$ with a fixed orientation as
$$\matr{C}=w_{_0}w_{_1}w_{_2}\ldots w_{_\ell}w_{_0}.$$
We define,
\begin{itemize}
\item $\oo_{_+}(\matr{C}) \isdef |\{ w_{_j}w_{_{j+1}} \ |\ \exists i \in [0,n-1], \ (w_{_j}, w_{_{j+1}})
= (u_{_i},u_{_{i+1}})\}|$,\\
\item $\oo_{_-}(\matr{C}) \isdef |\{ w_{_j}w_{_{j+1}} \ |\ \exists i \in [0,n-1], \ (w_{_j}, w_{_{j+1}})= (u_{_i},u_{_{i-1}}) \}|$,\\
\item $\ii_{_+}(\matr{C}) \isdef |\{ w_{_j}w_{_{j+1}} \ |\ \exists i \in [0,n-1], \ (w_{_j}, w_{_{j+1}})= (v_{_i},v_{_{i+k}}) \}|$,\\
\item $\ii_{_-}(\matr{C}) \isdef |\{ w_{_j}w_{_{j+1}} \ |\ \exists i \in [0,n-1], \ (w_{_j}, w_{_{j+1}})= (v_{_i},v_{_{i-k}}) \}|$,\\
\item $b(\matr{C}) \isdef |\{ w_{_j}w_{_{j+1}} \ |\ \exists i \in [0,n-1], \ \{w_{_j}, w_{_{j+1}}\}= \{ u_{_i},v_{_{i}}\} \}|$
\end{itemize}
in which addition, $+$, and subtraction,$-$, are in $\mathbb{Z}_{_{n}}$. Also, hereafter, when we refer to any one of the above parameters we assume that the prefixed orientation is clear from the context.
Let us list the following basic facts for further reference.
\begin{lem}{\label{lem:cyc-odd} For any cycle $\matr{C}$ of length $\ell$ $($along with a fixed orientation$)$ in
$\matr{Pet}(n,k)$, we have,
\begin{itemize}
\item[{\rm (a)}] Parameters
$\oo_{_+}(\matr{C}),\oo_{_-}(\matr{C}),\ii_{_+}(\matr{C}),\ii_{_-}(\matr{C}),$ and $b(\matr{C})$ are all non-negative.
\item[{\rm (b)}] $\oo_{_+}(\matr{C})+\oo_{_-}(\matr{C})+\ii_{_+}(\matr{C})+\ii_{_-}(\matr{C})+b(\matr{C})=\ell.$
\item[{\rm (c)}]
$\oo_{_+}(\matr{C})+\oo_{_-}(\matr{C}) \leq n \quad {\rm and} \quad \ii_{_+}(\matr{C})+\ii_{_-}(\matr{C}) \leq n.$
\item[{\rm (d)}]
$\oo_{_+}(\matr{C})-\oo_{_-}(\matr{C})+k(\ii_{_+}(\matr{C})-\ii_{_-}(\matr{C}))\stackrel{n}{\equiv} 0.$
\item[{\rm (e)}] $b(\matr{C})$ is an even number. Also, if $\oo_{_+}(\matr{C})+\oo_{_-}(\matr{C})\neq 0$ and $\ii_{_+}(\matr{C})+\ii_{_-}(\matr{C})\neq 0$, then $b(\matr{C})\geq 2$.
\end{itemize}
}\end{lem}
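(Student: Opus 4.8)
The plan is to prove parts (a)--(e) by elementary counting together with two telescoping arguments for the closed walk $\matr{C}$, one valued in $\mathbb{Z}_{_{n}}$ and one valued in $\mathbb{Z}_{_{2}}$. Part~(a) is immediate, since each of the five parameters is defined as the cardinality of a set. For part~(b), I would observe that every directed edge $(w_{_j},w_{_{j+1}})$ appearing in the oriented cycle $\matr{C}$ lies in exactly one of the five classes counted by $\oo_{_+},\oo_{_-},\ii_{_+},\ii_{_-},b$: an edge of the outer cycle is traversed either as $(u_{_i},u_{_{i+1}})$ or as $(u_{_i},u_{_{i-1}})$; an inner edge $v_{_i}v_{_{i+k}}$ is traversed either as $(v_{_i},v_{_{i+k}})$ or as $(v_{_i},v_{_{i-k}})$; and a spoke $u_{_i}v_{_i}$ is counted by $b$ irrespective of direction. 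Since $\matr{C}$ has exactly $\ell$ directed edges, summing the indicator of each class over all of them yields the identity in~(b).

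For part~(c), the key point is that $\matr{C}$ is a simple cycle, so it uses each undirected edge of $\matr{Pet}(n,k)$ at most once; in particular an outer edge cannot be traversed both forward and backward. The outer cycle has exactly $n$ edges, and the undirected edges underlying the $\oo_{_+}$- and $\oo_{_-}$-edges are therefore distinct outer edges, whence $\oo_{_+}(\matr{C})+\oo_{_-}(\matr{C})\leq n$. Similarly, the inner edges $\{v_{_i},v_{_{i+k}}\}$ number at most $n$ (exactly $n$ when $2k<n$, and fewer only in the degenerate case $2k=n$), so $\ii_{_+}(\matr{C})+\ii_{_-}(\matr{C})\leq n$. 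The only point requiring a moment of care here is the coincidence of inner edges when $2k=n$, but since we only need an upper bound this causes no difficulty.

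The heart of the lemma is part~(d), which I would obtain by a displacement (telescoping) argument. Define the index map $\phi:V(\matr{Pet}(n,k))\rightarrow \mathbb{Z}_{_{n}}$ by $\phi(u_{_i})=\phi(v_{_i})=i$. Traversing a directed edge changes $\phi$ by $+1$ for an $\oo_{_+}$-edge, by $-1$ for an $\oo_{_-}$-edge, by $+k$ for an $\ii_{_+}$-edge, by $-k$ for an $\ii_{_-}$-edge, and by $0$ for a spoke. Because $\matr{C}$ is a closed walk, the sum of these displacements around $\matr{C}$ telescopes to $\phi(w_{_0})-\phi(w_{_0})=0$ in $\mathbb{Z}_{_{n}}$; collecting terms gives exactly $\oo_{_+}(\matr{C})-\oo_{_-}(\matr{C})+k(\ii_{_+}(\matr{C})-\ii_{_-}(\matr{C}))\stackrel{n}{\equiv}0$.

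Finally, for part~(e) I would rerun the same telescoping idea with the parity map $\psi:V(\matr{Pet}(n,k))\rightarrow \mathbb{Z}_{_{2}}$ given by $\psi(u_{_i})=0$ and $\psi(v_{_i})=1$. An edge changes $\psi$ precisely when it is a spoke, so the net change around the closed cycle $\matr{C}$ equals $b(\matr{C})$ modulo $2$ and must vanish; hence $b(\matr{C})$ is even. For the second assertion, note that $\oo_{_+}(\matr{C})+\oo_{_-}(\matr{C})\neq 0$ forces $\matr{C}$ to visit a vertex of $\{u_{_0},\dots,u_{_{n-1}}\}$ and $\ii_{_+}(\matr{C})+\ii_{_-}(\matr{C})\neq 0$ forces it to visit a vertex of $\{v_{_0},\dots,v_{_{n-1}}\}$; since the spokes are the only edges joining these two vertex classes, $\matr{C}$ must use at least one spoke, and combined with evenness this gives $b(\matr{C})\geq 2$. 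The main obstacle, if any, is not conceptual but bookkeeping: one must set up the sign conventions in the displacement map of~(d) so that they match the orientation conventions in the definitions of $\oo_{_\pm}$ and $\ii_{_\pm}$, after which every part follows mechanically.
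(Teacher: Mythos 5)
Your proposal is correct and follows essentially the same route as the paper: parts (a)--(c) by direct counting, part (d) by the same telescoping potential-function argument (the paper's map $\varsigma(u_{_i})=\varsigma(v_{_i})=i$ is your $\phi$), and part (e) by the same parity-of-spoke-crossings observation. Your write-up is simply a more explicit version of the paper's proof, with no substantive difference.
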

\begin{proof}{Statements $(a), (b)$ and $(c)$ are clear by definitions. For $(d)$, let
$$\matr{C}=w_{_0}w_{_1}w_{_2}\ldots w_{_\ell}w_{_0}$$
and consider the potential function $\varsigma$ on the vertex set of $\matr{Pet}(n,k)$ to $[0,n-1]$, that is equal to $i$ exactly on
$u_{_i}$ and $v_{_i}$ (in Watkins' notation). Without loss of generality, let $\varsigma(w_{_0})=0$. Note that
$$0=\displaystyle{\sum_{\matr{C}} (\varsigma(w_{_{j+1}})-\varsigma(w_{_j}))} \stackrel{n}{\equiv}
(\oo_{_+}(\matr{C})+k\ii_{_+}(\matr{C}))-(\oo_{_-}(\matr{C})+k\ii_{_-}(\matr{C})).$$

 Statement $(e)$ is also clear by considering the fact that the number of transitions between the parts consisting of ${u_{_{i}}}$'s and ${v_{_{i}}}$'s is always an even number. Also, if the cycle $\matr{C}$ has some vertices in both parts,
then $b(\matr{C}) \not = 0$.
}\end{proof}

 Now, we consider some properties of the solution of the integer program $(*)$.

 \begin{lem}{\label{lem:basic} If $(\oo,\ii_{_+},\ii_{_-})$ is a solution of $(*)$, then
\begin{itemize}
\item[{\rm (a)}] Either $\ii_{_-}=0$ or $\ii_{_+}=0$.
\item[{\rm (b)}] If $n$ is even, then $k$ is even.
\item[{\rm (c)}] $ 0 \leq \oo+\ii_{_-}+\ii_{_+}\leq n$.
\item[{\rm (d)}] If $k$ is odd, then $\oo< k$.
\item[{\rm (e)}] If $k$ is even, then either $(\oo,\ii_{_+},\ii_{_-},t)=(k,0,1,0)$ or $\oo<k$.
\item[{\rm (f)}] $(n,k)|\oo$.
\end{itemize}
}\end{lem}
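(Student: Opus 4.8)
The plan is to separate the six assertions into three groups according to the tool they need: the purely algebraic facts (f) and the parity fact (b), which follow from feasibility alone; the exchange arguments (a) and (d), which use that a \emph{solution} of $(*)$ is an \emph{optimal} (objective-minimizing) feasible point; and the witness arguments (c) and (e), where I would bound the optimal value by exhibiting a cheap explicit feasible point. Throughout, the governing observation is that modifying a feasible $(\oo,\ii_{_+},\ii_{_-})$ without changing $\ii_{_+}-\ii_{_-}$ keeps $t$ fixed, so only the objective and the nonnegativity need rechecking.

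First I would dispose of (f) and (b). For (f), rewrite the equality constraint as $\oo = tn - k(\ii_{_+}-\ii_{_-})$; since $\gcd(n,k)$ divides both $n$ and $k$ it divides the right-hand side, hence divides $\oo$. For (b), subtract the second equality of $(*)$ from the first to get $(k-1)\ii_{_+}-(k+1)\ii_{_-}=tn-(2r+1)$. If $n$ is even the right-hand side is odd, whereas if $k$ were odd both $k-1$ and $k+1$ would be even, forcing the left-hand side to be even -- a contradiction; thus $k$ is even. Note that this is a statement about feasibility, so it also shows $(*)$ simply has no feasible point when $n$ is even and $k$ is odd.

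Next come the exchange arguments for (a) and (d), which genuinely use minimality. For (a), if both $\ii_{_+}\geq 1$ and $\ii_{_-}\geq 1$, replace $(\oo,\ii_{_+},\ii_{_-})$ by $(\oo,\ii_{_+}-1,\ii_{_-}-1)$: the difference $\ii_{_+}-\ii_{_-}$ is unchanged so the first constraint holds with the same $t$, the objective drops by $2$ (so parity and nonnegativity of $r$ survive, as the original objective is odd and at least $3$), contradicting optimality. For (d), suppose $k$ is odd and $\oo\geq k$ and pass to $(\oo-k,\ii_{_+}+1,\ii_{_-})$: here $\oo$ decreases by $k$ while $k(\ii_{_+}-\ii_{_-})$ increases by $k$, so $t$ is unchanged, and the objective changes by $-(k-1)$, which is \emph{even} because $k$ is odd, keeping it odd while strictly decreasing it (recall $k\geq 3$). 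This again contradicts optimality, giving $\oo<k$. It is worth stressing that for odd $k$ no cheap global witness is available (the odd girth can be arbitrarily large, e.g. for $\matr{Pet}(jk,k)$ with $j,k$ odd), so the \emph{local} exchange is essential here.

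Finally, (c) and (e) I would prove by producing feasible points that cap the optimum. For (c): if $n$ is odd, the outer-cycle point $(\oo,\ii_{_+},\ii_{_-})=(n,0,0)$ with $t=1$ is feasible of objective $n$; if $n$ is even then $k$ is even by (b) and the point $(k,0,1)$ with $t=0$ is feasible of objective $k+1<n$. In either case the optimal value -- attained by every solution -- is at most $n$, which with nonnegativity yields (c). For (e), the same point $(\oo,\ii_{_+},\ii_{_-},t)=(k,0,1,0)$ is feasible with \emph{odd} objective $k+1$ (here $k$ even is used), so the optimum is at most $k+1$; hence any solution has $\oo\leq\oo+\ii_{_+}+\ii_{_-}\leq k+1$. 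One then clears the two boundary values: $\oo=k+1$ forces $\ii_{_+}=\ii_{_-}=0$ and so $k+1=tn$, impossible for $n>2k$; and $\oo=k$ forces $\ii_{_+}+\ii_{_-}\leq 1$, where the value $0$ gives the even objective $k$ and the case $(\ii_{_+},\ii_{_-})=(1,0)$ needs $2k=tn$ (again impossible for $n>2k$), leaving exactly $(k,0,1,0)$. The \emph{main obstacle} is this last step: a naive exchange (doubling the move of (d)) only delivers $\oo<2k$, and it is precisely the explicit witness $(k,0,1,0)$ -- which exists only because $k$ is even -- that sharpens the bound to $\oo\leq k$ and isolates the single exceptional solution; the contrast with the purely local treatment forced in (d) is the conceptual heart of the lemma.
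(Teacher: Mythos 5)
Your proof is correct, and its overall architecture --- divisibility and parity for (f) and (b), local exchange arguments for (a) and (d), explicit feasible witnesses for (c) and (e) --- is exactly the paper's. The differences are in the details, and at both points where you diverge, your version is the one that actually works. In (d) the paper's printed exchange is $(\oo-k,\ii_{_+}-1,\ii_{_-})$ when $\ii_{_+}>0$ and $(\oo-k,\ii_{_+},\ii_{_-}+1)$ when $\ii_{_+}=0$; both moves change $\oo+k(\ii_{_+}-\ii_{_-})$ from $tn$ to $tn-2k$, which is a multiple of $n$ only if $n\mid 2k$, so the displayed points are not feasible (except in the degenerate case $n=2k$). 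Your move $(\oo-k,\ii_{_+}+1,\ii_{_-})$ preserves the constraint exactly, keeps the objective odd since $k-1$ is even, and is the correct repair. In (e) the paper's argument only rules out $\oo>k$ and never treats the boundary $\oo=k$, so it does not actually establish the stated dichotomy; your clearing of the cases $\oo=k+1$ and $\oo=k$ supplies the missing half. Finally, your recurring proviso ``impossible for $n>2k$'' is not cosmetic: for $n=2k$ part (e) is genuinely false --- e.g.\ for $\matr{Pet}(8,4)$ both $(\oo,\ii_{_+},\ii_{_-},t)=(4,0,1,0)$ and $(4,1,0,1)$ are solutions of $(*)$ with objective $5$ --- so the hypothesis $n>2k$ ought to be made explicit in (e); the paper's own step ``$tn>2k$'' silently assumes it as well.
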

\begin{proof}{\ \\
\begin{itemize}
\item[{\rm (a)}] Let $(\oo,\ii_{_+},\ii_{_-})$ be a solution with $\ii_{_-}\neq 0$ and $\ii_{_+}\neq 0$, and define the
new parameters $\ii'_{_+}$ and $\ii'_{_-}$ as,
$$\left\lbrace\begin{array}{lll}
\ii'_{_+}=\ii_{_+}-\ii_{_-},\ \ii'_{_-}=0 &\qquad & {\rm if \ }\ii_{_+}-\ii_{_-}\geq 0,\\
\ii'_{_-}=\ii_{_-}-\ii_{_+},\ \ii'_{_+}=0 &\qquad & {\rm if \ }\ii_{_+}-\ii_{_-}< 0,
\end{array}\right.$$
and note that $(\oo,\ii'_{_+},\ii'_{_-})$ is a solution satisfying $\oo+\ii'_{_+}+\ii'_{_-}<\oo+\ii_{_+}+\ii_{_-}$,
which is a contradiction.
\item[{\rm (b)}] Suppose that $n$ is even and $k$ is odd. Since $\oo+k(\ii_{_+}-\ii_{_-})=tn$ is even, either both $\oo$
and $(\ii_{_+}-\ii_{_-})$ are even or both are odd. Thus,
$$\oo+\ii_{_+}+\ii_{_-}=(\oo+\ii_{_+}-\ii_{_-})+2\ii_{_-},$$
is even, which is a contradiction.
\item[{\rm (c)}] If $n$ is odd then $(\oo',\ii'_{_+},\ii'_{_-})=(n,0,0)$ is feasible. Also, if $n$ is even, then by part $(b)$, $k$ is even, and one may verify that $(\oo',\ii'_{_+},\ii'_{_-})=(k,0,1)$ is feasible. In both cases, since we have an objective less than or equal to $n$ and $(\oo,\ii_{_+},\ii_{_-})$ is a minimizer, we have $ \oo+\ii_{_-}+\ii_{_+}\leq n$.
\item[{\rm (d)}] By contradiction assume that $\oo\geq k$. Using $(a)$ we may assume $\ii_{_-}= 0$ or $\ii_{_+}= 0$ and define
$$\left\lbrace\begin{array}{llll}
\oo'=\oo-k,\quad &\ii'_{_-}=\ii_{_-}=0,\quad & \ii'_{_+}=\ii_{_+}-1, \quad &{\rm if}\ \ii_{_+}> 0,\\
\oo'=\oo-k,\quad & \ii'_{_-}=\ii_{_-}+1,&\ii'_{_+}=0, &{\rm if}\ \ii_{_+}= 0.
\end{array}\right.$$
Now note that $(\oo',\ii'_{_+},\ii'_{_-})$ is feasible for $(*)$, satisfying
$$\oo'+\ii'_{_+}+\ii'_{_-}\leq \oo+\ii_{_+}+\ii_{_-}-(k-1)<\oo+\ii_{_+}+\ii_{_-},$$
which is a contradiction.

 \item[{\rm (e)}] If $k$ is even, then $(\oo',\ii'_{_+},\ii'_{_-})=(k,0,1)$ is feasible.
Now, if $(\oo,\ii_{_+},\ii_{_-})$ is a solution giving rise to an odd objective less than $k+3$ and $\oo> k$, then we should have $k<\oo\leq \oo+(\ii_{_+}+\ii_{_-})\leq k+1$. Consequently, $\oo=k+1$, and we may conclude that $\ii_{_+}+\ii_{_-}=0$ and $k+1=\oo=\oo+k(\ii_{_+}-\ii_{_-})=tn>2k\geq k+2$, which is a contradiction.

 \item[{\rm (f)}] By the definition of $(*)$, we have
$\oo=tn-k(\ii_{_+}-\ii_{_-}).$
Since $\gcd(n,k)$ divides the right hand side, it divides $\oo$ too.
\end{itemize}
}\end{proof}

 The following simple observation as a corollary of Lemma~\ref{lem:basic}(a) is sometimes quite useful.
Note that by Lemma~\ref{lem:basic}($a$) one may define $\ii \isdef \ii_{_+}-\ii_{_-}$ and talk about
a $(\oo,\ii,r,t)$ solution of the system $(*)$. Clearly, for such a solution if $\ii \geq 0$ then one has
$(\oo,\ii_{_+}=\ii,\ii_{_-}=0,r,t)$ as a solution and if $\ii \leq 0$ then one has
$(\oo,\ii_{_+}=0,\ii_{_-}=-\ii,r,t)$ as a solution. Hereafter, we may freely talk about $(\oo,\ii,r,t)$ as a solution, adapting
this convention. Also, note that by Lemma~\ref{lem:cyc-odd}($a$) for any solution $(\oo, \ii_{_+},\ii_{_-})$ we have,
$$|\ii|=|\ii_{_+}-\ii_{_-}|=\ii_{_+}+\ii_{_-}.$$

 On the other hand, the set of solutions of $(*)$ is equal to the set of solutions of the following minimization problem,
$$
\begin{array}{llll}
\min & &\oo+ |\ii| &\\
& &\oo+|\ii| &=2r+1\\
& & \oo + k\ii & =tn \\
& &\oo& \geq 0.
\end{array}
$$
Now, by substitution for the variable $\oo$ we find the following minimization problem
whose set of solutions is equal to the set of solutions of $(*)$.
$$
\begin{array}{llll}
\min & & tn -k\ii + |\ii| &\\
& &tn -k\ii+|\ii| &=2r+1 \\
& &tn -k\ii& \geq 0.
\end{array} \qquad (**)
$$

 The following result is our first step toward a clarification of relationships between the set of solutions of $(*)$
and the odd girt of $\matr{Pet}(n,k)$.

 \begin{thm}{\label{thm:main} The following statements are equivalent,
\begin{itemize}
\item[{\rm (a)}] The feasible set of $(*)$ is non-empty.
\item[{\rm (b)}] $\matr{Pet}(n,k)$ has an odd cycle.
\end{itemize}
Also, if $(\oo, \ii_{_+}, \ii_{_-},r,t)$ is a solution of $(*)$, then the odd girth of $\matr{Pet}(n,k)$ is equal to $2r+1$
if $(\oo, \ii_{_+}, \ii_{_-},r,t)$ is a trivial solution $($i.e. either $\oo=0$ or $\ii_{_+}+\ii_{_-}=0)$. The odd girth
of $\matr{Pet}(n,k)$ is equal to $2r+3$, if all solutions of $(*)$ are nontrivial.
}\end{thm}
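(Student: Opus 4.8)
The plan is to set up a precise dictionary between odd cycles of $\matr{Pet}(n,k)$ and feasible points of $(*)$, using the five edge-type counters of Lemma~\ref{lem:cyc-odd} as the bridge. Given an oriented cycle $\matr{C}$ of length $\ell$, write $D=\oo_{_+}(\matr{C})-\oo_{_-}(\matr{C})$ and $I=\ii_{_+}(\matr{C})-\ii_{_-}(\matr{C})$ for its net outer and net inner displacements. The closing condition is exactly $D+kI=tn$ for some integer $t$ (each forward or backward outer step shifts the position index by $\pm 1$ and each inner step by $\pm k$, and the walk returns to its start), which refines the congruence of Lemma~\ref{lem:cyc-odd}(d) to an exact integer equation. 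After reversing the orientation if necessary I may assume $D\geq 0$, so that setting $\oo=D$ together with $(\ii_{_+},\ii_{_-})$ read off from the sign of $I$ produces a point satisfying the first constraint of $(*)$.

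The key parity observation makes this correspondence work: since $b(\matr{C})$ is even (Lemma~\ref{lem:cyc-odd}(e)) and $\ell$ is odd, the quantity $\oo_{_+}(\matr{C})+\oo_{_-}(\matr{C})+\ii_{_+}(\matr{C})+\ii_{_-}(\matr{C})=\ell-b(\matr{C})$ is odd; and because $|a-b|\equiv a+b \pmod 2$, this forces $|D|+|I|$ to be odd, hence positive, so $\oo+\ii_{_+}+\ii_{_-}=|D|+|I|=2r+1$ for some $r\geq 0$ and the derived point is feasible. This gives (b) $\Rightarrow$ (a). For (a) $\Rightarrow$ (b) I run the construction in reverse: from a feasible point, using Lemma~\ref{lem:basic}(a) to assume $\ii_{_-}=0$ (say), I build an explicit closed walk --- $\oo$ forward outer edges, one spoke, $\ii_{_+}$ forward inner edges (which land back at index $tn\equiv 0$ by the constraint), and one closing spoke --- of odd length $\oo+\ii_{_+}+2$; in the trivial cases ($\oo=0$ or $\ii_{_+}+\ii_{_-}=0$) the two spokes are dropped and the walk has length $2r+1$. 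A closed walk of odd length always contains an odd cycle, which yields (b).

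For the girth formula, let $m^{*}=2r+1$ be the optimal value of $(*)$. The bound $\go(\matr{Pet}(n,k))\geq m^{*}$ is immediate: the shortest odd cycle yields, via the map above, a feasible point whose objective $|D|+|I|\leq \ell-b(\matr{C})\leq \ell$ is at least $m^{*}$. If some optimal solution is trivial, the walk of length $2r+1$ built above is a closed odd walk, so $\go\leq 2r+1$ and equality holds. If every optimal solution is nontrivial, the mixed construction gives a closed odd walk of length $2r+3$, so $\go\leq 2r+3$; to finish I must exclude an odd cycle of length exactly $m^{*}$. Such a cycle would map to a feasible point with $|D|+|I|\leq \ell-b(\matr{C})=m^{*}-b(\matr{C})$, and optimality ($|D|+|I|\geq m^{*}$) forces $b(\matr{C})=0$; by Lemma~\ref{lem:cyc-odd}(e) the cycle then lies entirely in the outer or entirely in the inner part, making the derived point trivial --- contradicting the hypothesis. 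Hence $\go=2r+3$.

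The main obstacle is handling the spoke count $b$ exactly rather than merely up to inequalities: the objective of $(*)$ deliberately ignores spokes, so the whole argument hinges on the clean sandwich $m^{*}\leq |D|+|I|\leq \ell-b(\matr{C})$ together with the parity-forced fact that spending fewer than two spokes is possible only by remaining within a single part. Making the bookkeeping of orientations, the reduction of an arbitrary closed odd walk to an odd cycle, and the trivial/nontrivial dichotomy line up so that the $+2$ is both necessary (the lower bound) and achievable (the explicit walk) is the delicate point; the parity computation showing $|D|+|I|$ is odd and positive is precisely what makes the feasibility correspondence hold at all.
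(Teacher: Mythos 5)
Your proposal is correct and follows essentially the same route as the paper's proof: the same dictionary between oriented odd cycles and feasible points of $(*)$ built from the counters of Lemma~\ref{lem:cyc-odd}, the same explicit closed-walk constructions (outer-only, inner-only, and mixed with two spokes), and the same final step using the parity of $b(\matr{C})$ and Lemma~\ref{lem:cyc-odd}(e) to show that a length-$(2r+1)$ cycle would yield a trivial solution, pinning the odd girth at $2r+1$ or $2r+3$. The only cosmetic deviation is that you net both counters to $(|D|,|I|)$ throughout and cite Lemma~\ref{lem:basic}(a) for feasible points (it is stated for optimal solutions), but the netting argument applies verbatim there, so this is not a gap.
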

\begin{figure}[ht]
\label{system}
\centering{\includegraphics[width=12cm]{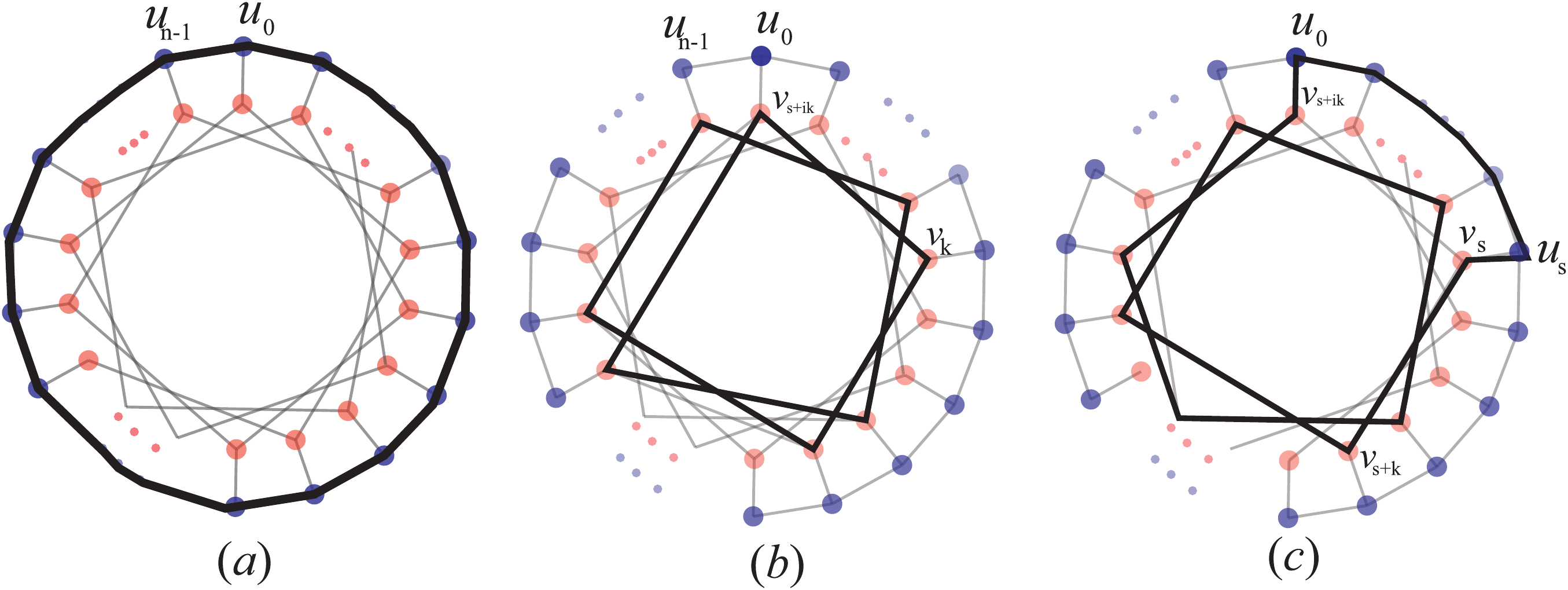}}
\caption{Constructing an odd cycle given $\ii_{_+},\ii_{_-}$ and $\oo$ with $\oo+k (\ii_{_+}-\ii_{_-})\stackrel{n}{\equiv} 0$ in three cases: $(a)$ $\ii_{_+}+\ii_{_-}=0$, \ $(b)$ $\oo=0$, $(c)$ $\ii_{_+}+\ii_{_-}\neq 0$ and $\oo\neq 0$.}
\end{figure}

 \begin{proof}{\ \\
\begin{itemize}
\item{$(a) \Rightarrow (b):$
Let $(\oo, \ii_{_+},\ii_{_-},r,t)$ be a feasible point of the system
$(*)$. First, we show that there exists an odd cycle $\matr{C}$ in $\matr{Pet}(n,k)$ of length
$\ell$, where
$$\ell \leq \left\lbrace\begin{array}{lll}
2r+1, &\qquad& {\rm if\ } \oo=0 \ {\rm or} \ \ii_{_+}+\ii_{_-}=0,\\
2r+3&\qquad& {\rm otherwise}.
\end{array}\right.$$
Consider the following three cases.
\begin{itemize}
\item[{\rm i)}]
If $\ii_{_+}+\ii_{_-}=0$, then we have $\oo \stackrel{n}{\equiv} 0$, $\oo=2r+1\neq 0$ and $0\leq \oo\leq n$. Consequently, $2r+1=\oo=n$, and
we have an odd cycle (the outer cycle $u_{_0}u_{_1}\dots u_{_{n-1}}u_{_0}$) of length $2r+1=n$ (see Figure~\ref{system}$(a)$).

 \item[{\rm ii)}] If $\oo=0$, then without loss of generality suppose that $\ii_{_-}\leq\ii_{_+}$, now
$$\ii\isdef \ii_{_+}-\ii_{_-}=\ii_{_+}+\ii_{_-}-2\ii_{_-}=2r+1-2\ii_{_-}=2r'+1> 0 \ \ {\rm and\ \ } k(\ii_{_+}-\ii_{_-}) \stackrel{n}{\equiv} 0.$$
Consequently, one can consider the odd closed walk $v_{_0}v_{_k}v_{_{2k}}\dots v_{_{(\ii-1)k}}v_{_0}$. The length of this closed walk is equal to $\ii=2r'+1$, and contains
at least one odd cycle of length less than or equal to $2r+1$ (see Figure~\ref{system}$(b)$).

 \item[{\rm iii)}]
If $\ii_{_+}+\ii_{_-} \neq 0$ and $\oo \neq 0$, consider the closed walk
$$\left\lbrace\begin{array}{ll}
u_{_{0}}u_{_{1}}\dots u_{_{\oo}}v_{_{\oo}}v_{_{\oo+k}}v_{_{\oo+2k}}\dots v_{_{\oo+(\ii_{_+}-\ii_{_-}-1)k}}v_{_{0}}u_{_{0}},\quad & {\rm if} \ \ii_{_+}\geq\ii_{_-},\\
u_{_{0}}u_{_{1}}\dots u_{_{\oo}}v_{_{\oo}}v_{_{\oo-k}}v_{_{\oo-2k}}\dots v_{_{\oo-(\ii_{_-}-\ii_{_+}-1)k}}v_{_{0}}u_{_{0}},\quad &{\rm if} \ \ii_{_+}< \ii_{_-}.
\end{array}\right.
$$
Note that in either case we have a closed walk of length $\oo+k|\ii_{_+}-\ii_{_-}|+2\leq 2r+3$ that contains an odd cycle of length less than or equal to $2r+3$ (see Figure~\ref{system}$(c)$).
\end{itemize}
}
\item{$(b) \Rightarrow (a):$
Let $\matr{C}_{_\mu}$ be a minimum odd cycle in $\matr{Pet}(n,k)$. Without loss of generality, one may assume that
$\oo_{_+}(\matr{C}_{_\mu})\geq \oo_{_-}(\matr{C}_{_\mu})$ (reverse the direction, otherwise). Also, let
$$\oo \isdef \oo_{_+}(\matr{C}_{_\mu})-\oo_{_-}(\matr{C}_{_\mu}), \ \ii_{_+} \isdef \ii_{_+}(\matr{C}_{_\mu}), {\rm and\ }
\ii_{_-}= \ii_{_-}(\matr{C}_{_\mu}).$$
Note that since $l(\matr{C}_{_\mu})=\oo+\ii_{_+}+\ii_{_-}+2\oo_{_-}(\matr{C}_{_\mu})+b(\matr{C}_{_\mu})$ is odd, and $b(\matr{C}_{_\mu})$ is even, then $\oo+\ii_{_+}+\ii_{_-}$ is odd. Also, by Lemma~\ref{lem:cyc-odd} we have
$\oo+k(\ii_{_+}-\ii_{_-})\stackrel{n}{\equiv} 0$, and consequently, $(\oo, \ii_{_+},\ii_{_-})$ is feasible for $(*)$.
}
\end{itemize}
Again, let $\matr{C}_{_\mu}$ be a minimum cycle with parameters $(\oo, \ii_{_+},\ii_{_-},r)$. To determine the length of
$\matr{C}_{_\mu}$ we consider the following two cases.
\begin{enumerate}
\item
{\it There is a trivial solution $(\oo^{*}, \ii_{_+}^{*},\ii_{_-}^{*},r^{*})$}.

 By $(a) \Rightarrow (b)$, there exists an odd cycle of length less than or equal to $2r^{*}+1$ and consequently,
$l(\matr{C}_{_\mu})\leq 2r^{*}+1$.

 Also, by $(b) \Rightarrow (a)$, parameters $(\oo, \ii_{_+},\ii_{_-},r)$ are feasible for $(*)$ and
$$l(\matr{C}_{_\mu})\geq \oo+\ii_{_+}+\ii_{_-}=2r+1\geq 2r^{*}+1.$$
Hence, $l(\matr{C}_{_\mu})\geq 2r^{*}+1$ which shows that $l(\matr{C}_{_\mu})= 2r^{*}+1$ in this case.
\item
{\it There is not any trivial solution and $(\oo^{*}, \ii_{_+}^{*},\ii_{_-}^{*},r^{*})$ is a non-trivial solution.}

 Similarly, by $(a) \Rightarrow (b)$, we have $l(\matr{C}_{_\mu})\leq 2r^{*}+3$. Also, using $(b) \Rightarrow (a)$, parameters
$(\oo, \ii_{_+},\ii_{_-},r)$ are feasible for $(*)$ and consequently,
$$l(\matr{C}_{_\mu})\geq \oo+\ii_{_+}+\ii_{_-}+b(\matr{C}_{_\mu})=2r+1+b(\matr{C}_{_\mu})\geq 2r^{*}+1+b(\matr{C}_{_\mu}).$$
Note that if $l(\matr{C}_{_\mu}) < 2r^{*}+3$, then since $l(\matr{C}_{_\mu})$ is odd,
by the above inequality we have
$$2r^{*}+1 \geq l(\matr{C}_{_\mu})\geq 2r^{*}+1+b(\matr{C}_{_\mu}),$$
which implies that $b(\matr{C}_{_\mu})=0$, and consequently by Lemma~\ref{lem:cyc-odd}(e) either $\oo=0$
or $\ii_{_+}+\ii_{_-}=0$. But since $(\oo,\ii_{_+},\ii_{_-},r)$ is feasible and $2r+1\leq 2r^*+1$
one concludes that $(\oo,\ii_{_+},\ii_{_-})$ is a \textit{trivial solution} which is a contradiction. Therefore,
$l(\matr{C}_{_\mu})\geq 2r^{*}+3$ and consequently, $l(\matr{C}_{_\mu})= 2r^{*}+3$. (As a byproduct we have also proved
that $b(\matr{C}_{_\mu}) \geq 2$ in this case.)
\end{enumerate}
}\end{proof}

 A couple of remarks on the proof of the previous theorem are instructive.
First, note that by the argument appearing in part (i) of the $(a) \Rightarrow (b)$ section of the proof we may conclude that
for any trivial solution $(\oo, \ii_{_+},\ii_{_-})$ if $\ii_{_+}+\ii_{_-}=0$ then $\oo$ is odd and equal to $n$. This shows that
if $n$ is even then we have $\ii_{_+}+\ii_{_-} \neq 0$.

 On the other hand, if $\oo=0$ then one may note that $(\oo=0,\ii_{_-}=0,\ii_{_+}=\frac{n}{\gcd(n,k)})$ is a solution. This not only shows that $\frac{n}{\gcd(n,k)}$ being an even number implies that $\oo\neq 0$, but also the above argument shows that
if $\oo+\ii_{_+}+\ii_- \notin \{n, \frac{n}{\gcd(n,k)}\}$ then $(\oo, \ii_{_+},\ii_{_-})$ is a nontrivial solution.

 Also, note that by the proof of the last part of the theorem, if $(\oo, \ii_{_+}, \ii_{_-},r,t)$ is a solution of $(*)$
and the odd girth of $\matr{Pet}(n,k)$ is equal to $2r+3$, then any cycle of minimum length will intersect both outer and inner cycles of $\matr{Pet}(n,k)$.

 Moreover, one may reprove the following classic result as a consequence of our discussions so far.

 \begin{cor}\label{cor:bipartite}{The generalized Petersen graph, $\matr{Pet}(n,k)$, is bipartite if and only if $n$ is even and $k$ is odd.
}\end{cor}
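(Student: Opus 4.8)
The plan is to exploit the machinery already developed, in particular Theorem~\ref{thm:main}, which tells us that $\matr{Pet}(n,k)$ has an odd cycle precisely when the feasible set of $(*)$ is non-empty. Since a graph is bipartite if and only if it contains no odd cycle, the corollary is equivalent to the statement that the feasible set of $(*)$ is empty if and only if $n$ is even and $k$ is odd. So the entire proof reduces to analyzing when $(*)$ has a feasible point, and the two implications can be handled separately.

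For the ``if'' direction, I would assume $n$ is even and $k$ is odd and show the feasible set of $(*)$ is empty, equivalently that $\matr{Pet}(n,k)$ has no odd cycle. This is exactly the content of Lemma~\ref{lem:basic}(b): there the authors show that if $(\oo,\ii_{_+},\ii_{_-})$ is a solution of $(*)$, then $n$ even forces $k$ even. Taking the contrapositive, $n$ even together with $k$ odd rules out any solution, so by Theorem~\ref{thm:main} the graph is bipartite. This direction is therefore essentially immediate from the lemma.

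For the ``only if'' direction, I would show that whenever we are \emph{not} in the case ``$n$ even and $k$ odd''---that is, when $n$ is odd, or when both $n$ and $k$ are even---the system $(*)$ admits a feasible point, so the graph is non-bipartite by Theorem~\ref{thm:main}. Here I would produce explicit feasible points, which the earlier discussion already supplies: if $n$ is odd, then $(\oo,\ii_{_+},\ii_{_-})=(n,0,0)$ satisfies $\oo+k(\ii_{_+}-\ii_{_-})=n=tn$ with $t=1$ and gives an odd objective $\oo+\ii_{_+}+\ii_{_-}=n$ (this is precisely the feasible point exhibited in the proof of Lemma~\ref{lem:basic}(c)). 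If $n$ is even but $k$ is even as well, then $(\oo,\ii_{_+},\ii_{_-})=(k,0,1)$ is feasible, since $\oo+k(\ii_{_+}-\ii_{_-})=k-k=0=tn$ with $t=0$ and the objective $\oo+\ii_{_+}+\ii_{_-}=k+1$ is odd (again this is the feasible point noted in Lemma~\ref{lem:basic}(c)). In both subcases the feasible set is non-empty, so $\matr{Pet}(n,k)$ contains an odd cycle and is not bipartite.

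I do not expect any genuine obstacle here, since all the required ingredients are already in place; the only care needed is the bookkeeping that exhausts the three cases $n$ odd, ($n$ even, $k$ even), and ($n$ even, $k$ odd) and matches them correctly against bipartiteness. The mild subtlety worth double-checking is the parity of the objective in each witnessed feasible point---one must confirm $\oo+\ii_{_+}+\ii_{_-}$ is odd (so that it equals $2r+1$ for an integer $r\geq 0$) rather than merely producing integers satisfying the linear congruence---but both witnesses above have been chosen with odd objective, so the verification is routine.
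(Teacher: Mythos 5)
Your proposal is correct and follows essentially the same route as the paper: both reduce bipartiteness to emptiness of the feasible set of $(*)$ via Theorem~\ref{thm:main}, invoke Lemma~\ref{lem:basic}(b) for the case $n$ even and $k$ odd, and exhibit the same explicit feasible points $(\oo,\ii_{_+},\ii_{_-})=(n,0,0)$ for $n$ odd and $(k,0,1)$ for $n,k$ both even. Your added remark about checking the parity of the objective (and noting that a non-empty feasible set with integer objective guarantees a minimizer, so the contrapositive of the lemma applies) is a careful touch that the paper leaves implicit.
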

\begin{proof}{First, note that if $n$ is even and $k$ is odd, then by Lemma~\ref{lem:basic}$(b)$ the feasible set of
the system $(*)$ is empty. Hence, by Theorem~\ref{thm:main} $\matr{Pet}(n,k)$ is bipartite since it does not have any odd cycle.

 On the other hand, if $n$ is odd, then $\oo=n$ and $\ii_{_-}=\ii_{_+}=0$ constitute a feasible set of parameters for
the system $(*)$, implying the existence of an odd cycle by Theorem~\ref{thm:main}.

 Also, if both $n$ and $k$ are even, then similarly, $\oo=k$ and $\ii_{_+}=0, \ii_{_-}=1$,
constitute a feasible set of parameters for
the system $(*)$, again implying the existence of an odd cycle by Theorem~\ref{thm:main}.
}\end{proof}

 Now, we set for a proof of Theorem~\ref{thm:oddgirth}, but first we will be needing the following simple lemma.

 \begin{lem}{\label{lem:uniq}
Consider the equation
$\oo+k \ii=tn,$
in which $k \in \mathbb{N}$, $n \in \mathbb{N}$ and $t \in \mathbb{Z}$ are constants and $\oo \in \mathbb{N}$ and
$\ii \in \mathbb{Z}$ are unknowns. Then
\begin{itemize}
\item[{\rm i)}]{If $t=0$ and $0 \leq \oo \leq k$} then the only solution
of this equation other than the trivial solution $(\oo,\ii)=(0,0)$ is $(\oo,\ii)=(k,-1)$.
\item[{\rm ii)}]{If $t \neq 0$ and $0 \leq \oo < k$ then the solution is uniquely determined as follows,
$$\left\lbrace
\begin{array}{lllll}
\oo=tn-k\lfloor \frac{tn}{k} \rfloor,&\quad& \ii=\lfloor \frac{tn}{k} \rfloor,&\quad& t>0,\\
\oo=k\lceil \frac{-tn}{k}\rceil +tn, &\quad& \ii=-\lceil \frac{-tn}{k}\rceil,&\quad& t<0.
\end{array}\right.
$$
}
\end{itemize}
}\end{lem}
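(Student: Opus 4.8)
The plan is to recognize the equation $\oo + k\ii = tn$ together with the range restriction on $\oo$ as nothing more than Euclidean division of $tn$ by $k$, so that the existence and uniqueness of the pair $(\oo,\ii)$ reduce to the existence and uniqueness of the quotient and remainder in the division algorithm. The only genuine subtlety is bookkeeping: the sign of $t$ governs whether the quotient is expressed by a floor or a ceiling, and the distinction between the closed range $0\leq \oo\leq k$ in (i) and the half-open range $0\leq \oo<k$ in (ii) is exactly what switches the count of admissible solutions from two to one. I would therefore isolate $\oo$ as $\oo = tn - k\ii$ at the outset and read off the constraint directly.

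For part (i) I would set $t=0$, so the equation becomes $\oo = -k\ii$. Since $\oo\geq 0$ and $k\geq 1$ force $\ii\leq 0$, the constraint $0\leq \oo\leq k$ translates into $0\leq -k\ii\leq k$, i.e. $-1\leq \ii\leq 0$. Hence $\ii\in\{0,-1\}$, producing exactly the two solutions $(\oo,\ii)=(0,0)$ and $(\oo,\ii)=(k,-1)$; discarding the trivial one gives the claim. Here it is essential that the upper bound is the \emph{closed} one $\oo\leq k$, since it is the endpoint value $\oo=k$ that supplies the nontrivial solution.

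For part (ii), writing $\oo = tn - k\ii$, the constraint $0\leq \oo<k$ says precisely that $\oo$ is the remainder and $\ii$ the quotient of $tn$ divided by $k$. When $t>0$ we have $tn>0$, and the unique quotient--remainder pair is $\ii=\lfloor tn/k\rfloor$, $\oo = tn - k\lfloor tn/k\rfloor$. When $t<0$ I would put $\ii=-m$ with $m>0$ (forced by $\oo\geq 0$) and note that $0\leq tn+km<k$ is equivalent to $\tfrac{-tn}{k}\leq m<\tfrac{-tn}{k}+1$; the unique integer in this half-open unit interval is $m=\lceil -tn/k\rceil$, yielding $\ii=-\lceil -tn/k\rceil$ and $\oo = tn+k\lceil -tn/k\rceil$, as stated. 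A quick check confirms that the ceiling handles the case of integral $-tn/k$ correctly, returning $\oo=0$.

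Finally I would verify uniqueness by the standard argument: if $(\oo,\ii)$ and $(\oo',\ii')$ were two solutions with $\oo,\oo'\in[0,k)$, then $k(\ii-\ii')=\oo'-\oo\in(-k,k)$ would force $\ii=\ii'$ and hence $\oo=\oo'$. I do not expect any real obstacle in this lemma, since the entire content is the division algorithm; the only place demanding care is confirming that the \emph{strict} inequality $\oo<k$ in (ii), as opposed to $\oo\leq k$ in (i), is exactly what excludes the second admissible value of the quotient and thereby restores uniqueness.
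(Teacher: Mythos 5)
Your proof is correct and follows essentially the same route as the paper: verify (or derive) the displayed expressions via the division algorithm, then obtain uniqueness from the fact that a multiple of $k$ lying in $(-k,k)$ must vanish. The only cosmetic difference is that the paper channels the uniqueness step through part (i) applied to the difference of two solutions, whereas you argue it directly inline; the underlying argument is identical.
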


 \begin{proof}{
Part (i) is clear. For part (ii) one may easily verify that the provided expressions satisfy the equation and its conditions.
On the other hand, let's assume that we have two solutions, namely,
$$\oo+k \ii=tn, \quad {\rm and} \quad \oo'+k\ii'=tn. $$
Without loss of generality, we may assume $\oo\geq \oo'$, and consequently,
$(\oo-\oo')+k(\ii-\ii')=0.$
But by part (i) and the fact that $0 \leq \oo-\oo' < k$ we have $\oo-\oo'=\ii-\ii'=0$, proving the uniqueness.
}\end{proof}

 This shows that if we have a nontrivial solution, and $\oo \neq k$ then either
$\oo=tn-k\lfloor \frac{tn}{k} \rfloor, \ii=\lfloor \frac{tn}{k} \rfloor$ or $\oo=k\lceil \frac{tn}{k}\rceil -tn, \ii=-\lceil \frac{tn}{k}\rceil$ for some positive integer $t$.

 \begin{proof}{(of Theorem~\ref{thm:oddgirth})

 Let $\matr{Pet}(n,k)$ be a non-bipartite graph.
We consider the following two cases.
\begin{itemize}
\item{{\it There exist a trivial solution $(\oo,\ii,r,t)$ for $(*)$}:\\
We show that in this case the odd girth is equal to $\frac{n}{\gcd(n,k)}$. Note that if $(\oo,\ii)=(0,\frac{n}{\gcd(n,k)})$,
then by the discussion proceeding Theorem~\ref{thm:main} the odd girth is equal to $\frac{n}{\gcd(n,k)}$.

 On the other hand, if $\ii=0$, then $\oo=n$ is odd and equal to the odd girth by Theorem~\ref{thm:main}.
Also, note that in this case $(\oo=0,\ii_{_+}=\frac{n}{\gcd(n,k)},\ii_{_-}=0)$ is feasible for $(*)$ and consequently,
$n \leq \frac{n}{\gcd(n,k)}$. This implies $\gcd(n,k)=1$ showing that the odd girth is equal to $n=\frac{n}{\gcd(n,k)}$.
}
\item{{\it There is not any trivial solution and $(\oo,\ii,r,t)$ is a solution of $(*)$}:\\
First, we prove the following claims. Recall that $par(k)$ is the parity function which is equal to one when $k$ is odd and is zero otherwise.
\begin{itemize}
\item[{\rm i)}]{There exists a nontrivial solution $(\oo,\ii,r,t)$ for which $|t| \leq \lfloor\frac{(k-1)^2}{n}\rfloor+par(k)$.\\
We consider the following cases.
\begin{itemize}
\item[-]{{\it $k$ and $n$ are odd.}\\
Note that $(\oo'=n-k\lfloor \frac{n}{k}\rfloor,\ii_{_+}'=\lfloor \frac{n}{k}\rfloor,\ii_{_-}'=0)$ is feasible for $(*)$,
and consequently, for any nontrivial solution $(\oo,\ii,r,t)$ we have
$$\oo+|\ii|\leq n-\lfloor \frac{n}{k}\rfloor(k-1)< n- (\frac{n}{k}-1)(k-1)=\frac{n}{k}+k-1.$$
Hence,
$$|t|n=|\oo+k \ii|\leq \oo+|\ii|k < \oo+(\frac{n}{k}+k-1-\oo)k=\oo(1-k)+n+k^2-k\leq n+(k-1)^2,$$
implying that
$$|t| \leq \lfloor\frac{(k-1)^2}{n}\rfloor+1.$$
}
\item[-]{{\it $k$ is even.}\\
Note that $(\oo'=k,\ii_{_+}'=0,\ii_{_-}'=1,t'=0)$ is feasible for $(*)$.
If this is a solution then the inequality is trivially satisfied. Otherwise, for any nontrivial solution $(\oo,\ii,r,t)$
we have $\oo+|\ii|< k+1$, implying that $\oo+|\ii| \leq k-1$ since $k$ is even. On the other hand, $\oo\geq 1$ implies that $|\ii|\leq k-2$, and consequently,
applying Lemma~\ref{lem:uniq} we have,
$$|t|n=|\oo+k \ii|\leq \oo+|\ii|k \leq (\oo+|\ii|)+(k-1)|\ii|\leq k-1+(k-1)(k-2)=(k-1)^2.$$
This shows that $$|t| \leq \lfloor\frac{(k-1)^2}{n}\rfloor.$$
}
\end{itemize}
}
\item[{\rm ii)}]{{\it There exists a nontrivial solution $(\oo,\ii,r,t)$ for which $|t|\leq \frac{2k}{\gcd(n,k)}$.
Moreover, if $\frac{n}{\gcd(n,k)}$ is even, then there exists a nontrivial solution $(\oo,\ii,r,t)$ for which \linebreak
$|t|\leq \frac{k}{\gcd(n,k)}$.}\\
First, assume that for positive integers $\alpha$ and $\beta < t$ we have $t=\beta +\frac{\alpha k}{\gcd(n,k)}$.
Then one may verify that,
$$\begin{array}{ll}
tn+(1-k)\lfloor \frac{tn}{k} \rfloor&=
(\frac{\alpha k}{\gcd(n,k)}+\beta )n+(1-k)\lfloor \frac{(\frac{\alpha k}{\gcd(n,k)}+\beta )n}{k}\rfloor\\
&= \left( \beta n+(1-k)\lfloor \frac{\beta n}{k}\rfloor\right)+\left( (\frac{\alpha k}{\gcd(n,k)})n+(1-k)(\frac{\alpha n}{\gcd(n,k)}) \right)
\quad\\
&= \left( \beta n+(1-k)\lfloor \frac{\beta n}{k}\rfloor\right)+\left( \frac{\alpha n}{\gcd(n,k)} \right)
\quad
\end{array}$$
$$\begin{array}{ll}
(1+k)\lceil \frac{tn}{k}\rceil -tn&=
(1+k)\lceil \frac{(\frac{\alpha k}{\gcd(n,k)}+\beta )n}{k}\rceil -(\frac{\alpha k}{\gcd(n,k)}+\beta )n\\
&= \left((1+k)\lceil \frac{\beta n}{k}\rceil-\beta n \right)+\left((1+k)(\frac{\alpha n}{\gcd(n,k)})-(\frac{\alpha k}{\gcd(n,k)})n \right)
\quad\\
&= \left( (1+k)\lceil \frac{\beta n}{k}\rceil- \beta n \right)+\left( \frac{\alpha n}{\gcd(n,k)} \right)
\quad
\end{array}$$
To prove the claim we proceed by contradiction. First, note that by parts $(d)$ and $(e)$ of Lemma~\ref{lem:basic}, either
$(\oo,\ii,t)=(k,-1,0)$ or $0 \leq \oo < k$. Clearly, the claims are true for $t=0$, hence we may assume that $t \neq 0$
and $0 \leq \oo < k$. On the other hand by Lemma~\ref{lem:uniq} we know that $\oo+\ii$ is equal to
$$\left\lbrace
\begin{array}{ll}
\oo+\ii=tn+(1-k)\lfloor \frac{tn}{k} \rfloor,& t>0,\\
&\\
\oo+\ii=(1+k)\lceil \frac{-tn}{k}\rceil +tn, & t<0
\end{array}\right.
$$
as the minimum value of the solutions of $(*)$. But by setting
$\alpha =1$ when $\frac{n}{\gcd(n,k)}$ is even and $\alpha =2$ when $\frac{n}{\gcd(n,k)}$ is odd,
and the above computation we see that one finds the minimum values
$$\left( \beta n+(1-k)\lfloor \frac{\beta n}{k}\rfloor \right)$$
or
$$\left( (1+k)\lceil \frac{\beta n}{k}\rceil - \beta n \right)$$
for the solutions of $(*)$ which are strictly smaller, and this is a contradiction.
}
\end{itemize}
}
Now, the rest of the proof is clear by the above case studies since the set $\matr{Ind}(n,k)$ essentially categorizes different cases and the minimization provides that odd girth by Theorem~\ref{thm:main}.
\end{itemize}
}\end{proof}
Note that Theorem~\ref{thm:oddgirth} not only provides an explicit expression for the odd girth of $\matr{Pet}(n,k)$,
but also the provided expression is also effectively computable in many cases. The following corollary and example will
show some special cases.
\begin{cor}{For any odd number $n$, there exists two generalized Petersen graphs
$\matr{Pet}(2n+1,2d+1)$ and $\matr{Pet}(2n+1,2d)$ with odd girth $n$.
}\end{cor}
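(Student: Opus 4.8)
The plan is to read both odd girths directly off the explicit formula of Theorem~\ref{thm:oddgirth}. Put $N=2n+1$; since $N$ is odd, for every admissible $k$ we have $\go(\matr{Pet}(N,k))=\min\big(\{\,N/\gcd(N,k),\,k+3\,\}\cup{\cal G}\big)\cap\mathbb{O}$, where ${\cal G}$ is the set assembled from $\matr{Ind}(N,k)$ as in the theorem. So it suffices to exhibit, for each prescribed parity of $k$, a value of $d$ for which the least odd number among these candidates equals $n$. I would choose $d$ so that one easily identified candidate equals $n$, and then check that no other candidate is an odd number below $n$.

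For the even graph I take $k=2d$ and realize $n$ through the term $k+3$: the choice $d=(n-3)/2$ gives $k=n-3$ (even, since $n$ is odd) and $k+3=n$, which corresponds to the feasible point $(\oo,\ii_{_+},\ii_{_-})=(k,0,1)$ of $(*)$ with $\oo+\ii_{_+}+\ii_{_-}=n-2$. For the odd graph I take $k=2d+1$ and realize $n$ through ${\cal G}$ with $t=1$: I would solve one of $N+(1-k)\lfloor N/k\rfloor+2=n$ or $(1+k)\lceil N/k\rceil-N+2=n$ for $d$; writing $q=\lfloor N/k\rfloor$, the first becomes $(k-1)q=n+3$, which for an appropriate divisor $q$ of $n+3$ forces an odd $k=1+(n+3)/q$. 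In either case the upper bound $\go\le n$ is immediate from the $(a)\Rightarrow(b)$ construction (case (iii)) in the proof of Theorem~\ref{thm:main}: a run of outer edges, two spokes and a block of inner edges realizing a feasible $(\oo,\ii_{_+},\ii_{_-})$ with $\oo+\ii_{_+}+\ii_{_-}=n-2$ is an explicit odd cycle of length $n$.

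The substance is the reverse inequality $\go\ge n$, i.e.\ minimality, and I expect it to be the main obstacle. For the chosen $d$ I would verify that the competing candidates cannot drop below $n$: that $\gcd(N,k)=1$, so that $N/\gcd(N,k)=N>n$; that $\matr{Ind}(N,k)$ is small enough (it is governed by $\lfloor(k-1)^2/N\rfloor$ and $k/\gcd(N,k)$) that neither ${\cal G}$-family dips under $n$; and, in the even case, that the $t=1$ solution with $\ii_{_+}=2$, $\oo=N-2k$, does not undercut $k+3$. By Lemma~\ref{lem:basic}(d),(e) every solution has $\oo<k$ apart from $(k,0,1,0)$, and Lemma~\ref{lem:uniq} shows the optimal $(\oo,\ii)$ is forced for each $t$, so the verification reduces to a finite list of floor/ceiling estimates in $N$ and $k$. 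This is exactly where the number-theoretic fit between $2n+1$ and the chosen $k$ --- through $\gcd(N,k\pm1)$ and the size of $\matr{Ind}(N,k)$ --- must be controlled, and it is what ultimately constrains which $d$ are admissible.
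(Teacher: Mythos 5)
The first thing to say is that the paper contains no proof of this corollary at all: it is asserted immediately after Theorem~\ref{thm:oddgirth} as a ``special case'' of that formula, so your proposal can only be measured against Theorem~\ref{thm:oddgirth} itself, and your plan --- choose $d$ so that one candidate in the formula equals $n$, then check that no odd candidate is smaller --- is certainly the intended derivation. The trouble is that the verification you postpone as ``a finite list of floor/ceiling estimates'' is not a technicality; it fails. In your even case $k=2d=n-3$, write $N=2n+1=2k+7$. For every odd $n\geq 13$ (so $k\geq 10$) the point $(\oo,\ii_{_+},\ii_{_-})=(7,2,0)$ is feasible for $(*)$ with $t=1$ and objective $9<k+1$, since $7+2k=N$; concretely, $u_{_0}u_{_1}\cdots u_{_7}v_{_7}v_{_{7+k}}v_{_0}u_{_0}$ is an explicit $11$-cycle in $\matr{Pet}(2n+1,n-3)$. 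Hence $\go(\matr{Pet}(2n+1,n-3))\leq 11<n$: the $t=1$ solution with $\ii_{_+}=2$ \emph{does} undercut $k+3$, exactly at the check you flagged. (If moreover $7\mid n-3$, the inner cycles already have odd length $(2n+1)/7<n$.) No other even $k$ can be substituted: $(k,0,1)$ is feasible whenever $k$ is even, so Theorem~\ref{thm:main} gives $\go(\matr{Pet}(N,k))\leq k+3$ for all even $k$, forcing $k\in\{n-3,n-1\}$, and $k=n-1$ (i.e. $N=2k+3$) admits a $7$-cycle by the same mechanism, via the feasible point $(3,2,0)$.

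The odd case cannot be completed either, for a structural reason. If $\gcd(2n+1,k)>1$, the inner cycles have odd length at most $(2n+1)/3<n$. Otherwise write $2n+1=qk+r$ with $0<r<k$; for odd $k$ both points $(r,q,0)$ and $(k-r,0,q+1)$ are feasible for $(*)$ with odd objectives $q+r$ and $q+k-r+1$, so Theorem~\ref{thm:main} gives $\go(\matr{Pet}(2n+1,k))\leq \min(q+r,\,q+k-r+1)+2\leq q+\tfrac{k+5}{2}\leq \tfrac{2n+1}{k}+\tfrac{k+5}{2}$, and this bound is strictly below $n$ for every odd $k$ with $3\leq k\leq n$ once $n\geq 15$ (it is convex in $k$, so it suffices to check $k=3$ and $k=n$); this is precisely why your partner candidate to the equation $(k-1)q=n+3$ cannot also stay $\geq n$. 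The remaining value $n=13$ fails by direct inspection: running Theorem~\ref{thm:oddgirth} over all $2\leq k\leq 13$ shows every $\matr{Pet}(27,k)$ has odd girth at most $11$. So for odd $n\geq 13$ there is no admissible $k$ of \emph{either} parity with $\go(\matr{Pet}(2n+1,k))=n$: the corollary is false as printed (it holds only for a few small values such as $n=5,9,11$), no proof strategy can succeed, and the paper's unproved assertion is itself in error. What Theorem~\ref{thm:oddgirth} does yield --- and what a corrected statement should say --- comes from making the first parameter a multiple of the second: for odd $n$, the graphs $\matr{Pet}(3n,3)$ and $\matr{Pet}(n(n+1),n+1)$ have odd girth exactly $n$, because $\gcd(N,k)=k$ together with Lemma~\ref{lem:basic}(d)--(f) forces $\oo\in\{0,k\}$, so the trivial solution $(0,n,0)$ of objective $n$ is optimal and Theorem~\ref{thm:main} applies. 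That is the route to a true statement with both parities of the second parameter; nothing tied to the first parameter $2n+1$ can work.
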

\begin{exm}{\label{exm:(n,2)}
Let us consider the following special cases as a couple of concrete examples.
\begin{itemize}
\item {\it The odd girth of $\matr{Pet}(n,2)$ is equal to $5$, except for the case $n=6$, for which odd girth is
equal to $3$.}\\
The case $n=5$ is well-known. If $n > 5$ then $n>(k-1)^2$, and consequently, for $\frac{n}{(n,2)}\geq 5$, we have $|t|\leq \lfloor \frac{(k-1)^2}{n} \rfloor=0$.
Hence, in this case the odd girth is equal to $k+3=5$. The only case for which we have $\frac{n}{(n,2)}<5$ is $n=6$, in which ase the odd girth is equal to $\frac{6}{(6,2)}=3$.
\item
{\it For $\matr{Pet}(n,3)$, we have}
$$
\go(\matr{Pet}(n,3))=\left\lbrace
\begin{array}{lll}
\frac{n}{3}&\quad & 3|n,\\
\frac{n+8}{3}&\quad & n\stackrel{3}{\equiv}1,\\
\frac{n+10}{3}&\quad & n\stackrel{3}{\equiv}2.\\
\end{array}\right.$$
Note that for $n\geq 5$ we have $|t|\leq \lfloor \frac{(k-1)^2}{n} \rfloor+1=1$. Therefore, we just need to consider the special cases $t=1$, $t=-1$ and the trivial solutions.
\end{itemize}
}\end{exm}

 The following result is essentially a direct consequence of Theorem~\ref{thm:oddgirth}, but we provide some direct proofs for clarity and simplicity.
\begin{proof}{(of Theorem~\ref{thm:girthbounds})\\
For the upper bounds note that
\begin{itemize}
\item{If $k$ is even and $\go(\matr{Pet}(n,k))\neq k+3$ then by feasibility of $(\oo,\ii_{_+},\ii_{_-})=(k,0,1)$ we have
$\go(\matr{Pet}(n,k))\leq k+1$.
}
\item{If $k$ is odd then $n$ is also odd by Corollary~\ref{cor:bipartite}.
Also, considering a feasible point uniquely determined
by Lemma~\ref{lem:uniq} for $t=1$, by and Theorem~\ref{thm:main} we have
$$\go(\matr{Pet}(n,k))\leq n+(1-k)\lfloor \frac{n}{k} \rfloor+2\leq n+(1-k)(\frac{n}{k}-1)+2=\frac{n}{k}+k+1.$$
}
\end{itemize}
For the lower bounds, first, and by contradiction, let $(\oo,\ii_{_+},\ii_{_-},r,t)$ be a solution of $(*)$ such that $\oo+\ii_{_-}+\ii_{_+}< \frac{n}{k}$, then
$$\oo+k(\ii_{_+}-\ii_{_-})\leq \oo+k(\ii_{_+}+\ii_{_-})\leq k(\oo+\ii_{_+}+\ii_{_-})<k\frac{n}{k}=n,$$
$$\oo+k(\ii_{_+}-\ii_{_-})\geq \oo-k(\ii_{_+}+\ii_{_-})\geq -k(\oo+\ii_{_+}-\ii_{_-})>-k\frac{n}{k}=-n.$$
So $-n<\oo+k(\ii_{_+}-\ii_{_-})=tn<n$, thus we have $\oo+k(\ii_{_+}-\ii_{_-})=t=0$.

 Now, if $k$ is even this contradicts Lemma~\ref{lem:basic}(e). On the other hand, if
$k$ and $n$ are odd, since $\oo+k(\ii_{_+}-\ii_{_-})=0$, either both $\oo$ and $\ii_{_+}-\ii_{_-}$ are odd or both of them are even. Hence, $\oo+\ii_{_+}+\ii_{_-}=(\oo+\ii_{_+}-\ii_{_-})+2\ii_{_-}$ is even, which is contradiction, since
we already know that $\oo+\ii_{_+}+\ii_{_-}=2r+1$.

 For the second term in the lower bound, consider a solution of $(**)$ as $tn -k\ii + |\ii| \geq 1$. Then one may verify that,
$$ tn -k\ii + |\ii| =
\left\lbrace\begin{array}{ll}
tn-k\ii+\ii=tn-(k-1)\ii &\ii> 0\\
tn-k\ii-\ii=tn-(k+1)\ii \ & \ii< 0\\
tn & \ii=0
\end{array}\right.
\geq \left\lbrace\begin{array}{lll}
\gcd(n,k-1) \ &\ii> 0\\
\gcd(n,k+1) & \ii< 0\\
k+1 \geq \gcd(n,k-1)+2& \ii=0.
\end{array}\right.$$

 Therefore, if $\ii=0$ then the solution is greater than or equal to $\gcd(n,k-1)+2$ (note that by
Definition~\ref{def:Petersen} we know that $2< 2k\leq n$). Otherwise,
the odd girth is greater than or equal to $\min\{\gcd(n,k-1),\gcd(n,k+1)\}$.

 }\end{proof}

 \section{On circular chromatic number of $\matr{Pet}(n,k)$}\label{sec:circhrom}

 In this section we will prove Theorems~\ref{thm:homk2ko} and \ref{thm:uphom}. To start, let us recall the following results
from \cite{haji10} and \cite{haji11}.
\begin{alphthm}{{\rm\cite{haji10}}\label{lem:haji}
Let $\matr{G}$ and $\matr{H}$ are two graphs, where $2q+1< \go(\matr{H})$, then,
$$\matr{G}^{^{\frac{1}{2q+1}}}\rightarrow \matr{H} \quad \Longleftrightarrow \quad \matr{G}\rightarrow \matr{H}^{^{2q+1}}.$$
}\end{alphthm}

 \begin{alphthm}{{\rm \cite{haji11}}\label{thm:haji} Let $\matr{G}$ be a non-bipartite graph with circular chromatic number $\chi_{_c}(\matr{G})$. Then for any positive integer $s$, we have
$$\chi_{_c}(\matr{G}^{^{\frac{1}{2s+1}}})=\frac{(2s+1)\chi_{_c}(\matr{G})}{s\chi_{_c}(\matr{G})+1}.$$
}\end{alphthm}

 Also, we will need the following lemma. Note that in what follows we use the crucial observation that for an odd power graph
$\matr{G}^{^{2q+1}}$, two vertices $u$ and $v$ are adjacent if and only if there exists a path of odd length less than or equal to $2q+1$ between them in $\matr{G}$. This in particular shows that if $p \leq q$ then $\matr{G}^{^{2p+1}}$ is a subgraph of
$\matr{G}^{^{2q+1}}$.
\begin{pro}{\label{pro:power}
Let $\matr{C} \isdef w_{_0}w_{_1}w_{_2}\dots w_{_{s-1}}w_{_0}$ be an odd cycle of the graph $\matr{G}$, where
$u,v\in V(\matr{G})\setminus \{w_{_0}, w_{_1},w_{_2}, \dots , w_{_{s-1}}\}$.
Also, let $u$ be adjacent to $w_{_i}$ and $v$ be adjacent to $w_{_{j}}$ in $\matr{G}$ for some $0\leq i < j\leq s-1$. Then in the power graph $\matr{G}^{^{s-2}}$,
\begin{itemize}
\item[$(a)$] $w_{_{i}}$ and $w_{_{j}}$ are adjacent,
\item[$(b)$] If $j-i\stackrel{s}{\not \equiv}\pm 1$, then $u$ is adjacent to $w_{_j}$,
\item[$(c)$] If $j-i\stackrel{s}{\not \equiv}\pm 2$, then $u$ is adjacent to $v$.
\end{itemize}
}\end{pro}
\begin{proof}{Since $s-2$ is odd, in each case we show that there exists an odd path of length less than or equal $s-2$ between the corresponding vertices in $\matr{G}$.
\begin{itemize}
\item[$(a)$] If $j-i$ is odd, then we have an odd walk
$w_{_{i}}w_{_{i+1}}\dots w_{_{j}}$ with length less than or equal $s-2$. If $j-i$ is even consider the
odd walk $w_{_{j}}w_{_{j+1}}\dots w_{_{s-1}}w_{_{0}}w_{_{1}}\dots w_{_{i}}$.
\item[$(b)$] If $j-i$ is odd, then we have an odd walk $w_{_{j}}w_{_{j+1}}\dots w_{_{s-1}}w_{_{0}}w_{_{1}}\dots w_{_{i}}u$,
of length $s-j+i+1$ less than or equal $s-2$. If $j-i$ is even consider the odd walk $uw_{_{i}}w_{_{i+1}}\dots w_{_{j}}$.
\item[$(c)$] If $j-i$ is odd, then we have an odd walk $uw_{_{i}}w_{_{i+1}}\dots w_{_{j}}v$ of length $j-i+2$ less than or equal $s-2$.
If $j-i$ is even consider the odd walk
$vw_{_{j}}w_{_{j+1}}\dots w_{_{s-1}}w_{_{0}}w_{_{1}}\dots w_{_{i}}u$ of length $s-j+i+2$ less than or equal $s-2$.
\end{itemize}
}\end{proof}
\begin{proof}{(of Theorem \ref{thm:homk2ko}$(a)$)\\
We prove the result for $\oo= tn-k\lfloor \frac{tn}{k} \rfloor$,
$\ii=\lfloor \frac{tn}{k} \rfloor$ and $2r+1=\ii+\oo$ (i.e. $t > 0$).\\
(for $\oo= k\lceil \frac{-tn}{k}\rceil +tn, \ii=-\lceil \frac{-tn}{k}\rceil$ (i.e. when $t < 0$)
one may use a similar argument.)

 Consider the set
$$\matr{P}=\{u_{_{0}},\cdots,u_{_{\oo}}\} \cup \{v_{_{0}},\cdots,v_{_{\oo}}\}
\cup \{u_{_{\oo+k}},\cdots,u_{_{\oo+(\ii-1)k}}\} \cup \{v_{_{\oo+k}},\cdots,v_{_{\oo+(\ii-1)k}}\},
$$
as a subset of $V(\matr{Pet}(n,k))$ and note that $|\matr{P}|=4r+2$.
Now, it is enough to show that $\matr{P}$ constitutes a clique in $\matr{Pet}(n,k)^{^{2r+1}}$.
Clearly, for this, it is enough to show that there exists an odd path of length less than or equal to
$2r+1$ between any pair of vertices in $\matr{P}$.

 For each $0 \leq i \leq n-1$ consider the $(2r+3)$-cycle, $\matr{C}_{_{i}}$, defined as follows
$$\matr{C}_{_{i}} \isdef u_{_{i}}u_{_{i+1}}\dots u_{_{i+\oo}}v_{_{i+\oo}}v_{_{i+\oo+k}}v_{_{i+\oo+2k}}\dots v_{_{i+\oo+(\ii-1)k}}v_{_{i}}u_{_{i}},$$
where summations are in modulo $n$.
Applying Proposition~\ref{pro:power}($a$) to $\matr{C}_{_{0}}$ we find out that
$$\{u_{_{0}},\cdots,u_{_{\oo}}\} \cup \{v_{_{\oo+k}},\cdots,v_{_{\oo+(\ii-1)k}}\} \cup \{v_{_{0}},v_{_{\oo}}\}$$
constitutes a clique in $\matr{Pet}(n,k)^{^{2r+1}}$.

 We claim that, if $i,j \in \{0,1,2, \dots, \oo\}$, and $h,\ell \in \{0,1,2,3,...,\ii-1\}$,
then there exists an odd path of length less than or equal to $2r+1$ between the following pairs of vertices
in $\matr{Pet}(n,k)$.

\begin{itemize}
\item{{\it $v_{_i}$ to $u_{_j}$.}\\ 
Applying Proposition~\ref{pro:power}$(a)$ to $\matr{C}_{_{i}}$ the claim is true for $j \geq i$. For $j < i$
use the same argument on $\matr{C}_{_{i-\oo}}$.
}

 \item{{\it $v_{_i}$ to $v_{_j}$.}\\ 
For $j > i$ note that $v_{_j}$ is connected to $u_{_j}$ in $\matr{C}_{_{i}}$ and the distance between
$v_{_i}$ and $u_{_j}$ in $\matr{C}_{_{i}}$ is greater than one. Hence,
Proposition~\ref{pro:power}$(b)$ proves this case. If $j < i$ then use the same reasoning on $\matr{C}_{_{i-\oo}}$.
}

 \item{{\it $u_{_{\oo+hk}}$ to $v_{_{\oo+\ell k}}$.}\\ 
If $\ell \geq h$ invoke Proposition~\ref{pro:power}$(a)$ for $\matr{C}_{_{hk}}$. If $\ell < h$ consider
$\matr{C}_{_{(h-\ii)k}}$, note that $u_{_{\oo+hk}}$ is connected to $v_{_{\oo+hk}}$ in this cycle, and invoke
Proposition~\ref{pro:power}$(b)$.
}
\item{{\it $u_{_{\oo+hk}}$ to $u_{_{\oo+\ell k}}$.}\\ 
Note that if $|\ell-h| \neq 2$, we may invoke
Proposition~\ref{pro:power}$(c)$ in $\matr{C}_{_{0}}$. Also, if $|\ell-h|=2$, we may
applying Proposition~\ref{pro:power}$(b)$ on $\matr{C}_{_{hk}}$ if $\ell > h$. If $|\ell-h|=2$ and $\ell < h$, consider $\matr{C}_{_{(h-\ii)k}}$ and invoke
Proposition~\ref{pro:power}$(b)$. \\
}

 \item{{\it $u_{_{\oo+hk}}$ to $v_{_i}$.}\\ 
If $h\neq 0$, consider $\matr{C}_{_{0}}$ and Proposition~\ref{pro:power}$(c)$, (note that in this case the distance
between $v_{_{\oo+hk}}$ and $u_{_i}$ is not equal to $2$). Also, the case $h=0$ is the case for $u_{_{i}}$ and $v_{_j}$ mentioned before.
}

 \item{{\it $u_{_{\oo+hk}}$ to $u_{_i}$.}\\ 
If $h=0$ this is the case of $u_{_{i}}$ and $u_{_j}$ mentioned before. If $h\neq 0$,
the distance between $v_{_{\oo+hk}}$ and $u_{_i}$ in $\matr{C}_{_{0}}$ is not equal to $1$. Therefore,
we can apply Proposition~\ref{pro:power}$(b)$.
}

 \item{{\it $v_{_{\oo+hk}}$ to $v_{_i}$.}\\ 
If $h=0$ this is the case of $v_{_{i}}$ and $v_{_j}$. Also, for $i=0$ we can apply
Proposition~\ref{pro:power}$(a)$ on $\matr{C}_{_{0}}$.
If $i\neq 0$ and $h\neq 0$, the distance between $v_{_{\oo+hk}}$ and $u_{_i}$ in $\matr{C}_{_{0}}$ is not equal to $1$. Hence,
we can apply Proposition~\ref{pro:power}$(b)$.
}
\end{itemize}
}\end{proof}

 \begin{cor}{
If the system $(*)$ does not have any trivial solution and the odd girth of $\matr{Pet}(n,k)$ is equal to $2r+3$, then
$$\matr{Pet}(n,k)\nrightarrow \matr{C}_{_{2r+3}}.$$
}\end{cor}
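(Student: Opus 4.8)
The plan is to argue by contradiction, combining Theorem~\ref{thm:homk2ko}$(a)$ with the functoriality of graph powers. First I would record the numerical fact that drives everything: since $(*)$ has no trivial solution, every solution is nontrivial, so a minimizing solution $(\oo,\ii_{_+},\ii_{_-},r)$ satisfies $\oo+\ii_{_+}+\ii_{_-}=2r+1$ with $\oo\geq 1$ and $\ii_{_+}+\ii_{_-}\geq 1$, whence $2r+1\geq 2$ and therefore $r\geq 1$. I would isolate this lower bound on $r$ at the outset, since it is precisely what makes the final inequality fail.

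Next, suppose for contradiction that there is a homomorphism $\matr{Pet}(n,k)\rightarrow \matr{C}_{_{2r+3}}$. Applying the observation recorded in the preliminaries that $\matr{G}\rightarrow \matr{H}$ implies $\matr{G}^{r}\rightarrow \matr{H}^{r}$ (here with exponent $2r+1$) yields
$$\matr{Pet}(n,k)^{^{2r+1}}\rightarrow \matr{C}_{_{2r+3}}^{^{2r+1}}.$$
Since the odd girth of $\matr{Pet}(n,k)$ equals $2r+3$ and $(*)$ has no trivial solution, Theorem~\ref{thm:homk2ko}$(a)$ supplies a homomorphism $\matr{K}_{_{4r+2}}\rightarrow \matr{Pet}(n,k)^{^{2r+1}}$. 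Composing the two homomorphisms gives
$$\matr{K}_{_{4r+2}}\rightarrow \matr{C}_{_{2r+3}}^{^{2r+1}}.$$

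The contradiction then comes from a simple vertex count: any homomorphism out of a complete graph $\matr{K}_{_{m}}$ into a simple graph must be injective on vertices, since two adjacent vertices cannot share an image without creating a loop, so the target must have at least $m$ vertices. But $\matr{C}_{_{2r+3}}^{^{2r+1}}$ is a power of the $(2r+3)$-cycle and so has exactly $2r+3$ vertices, forcing $4r+2\leq 2r+3$, i.e. $r\leq \tfrac{1}{2}$, which contradicts $r\geq 1$. Therefore no homomorphism $\matr{Pet}(n,k)\rightarrow \matr{C}_{_{2r+3}}$ exists.

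I expect the only genuinely delicate point to be the bookkeeping that guarantees $r\geq 1$ (equivalently, that the ``no trivial solution'' hypothesis is incompatible with odd girth $3$, where Brooks-type $\matr{C}_{_{3}}$-colorings would otherwise exist); everything else is a formal composition of homomorphisms plus the vertex-count obstruction. One could instead note directly that $\matr{C}_{_{2r+3}}^{^{2r+1}}=\matr{K}_{_{2r+3}}$ for $r\geq 1$, since every pair of vertices of the cycle is joined by an odd walk of length at most $2r+1$, and then invoke $\matr{K}_{_{4r+2}}\nrightarrow \matr{K}_{_{2r+3}}$; but the vertex count is the cleaner route and requires no explicit computation of the power graph.
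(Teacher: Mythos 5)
Your proposal is correct and follows essentially the same route as the paper: assume $\matr{Pet}(n,k)\rightarrow \matr{C}_{_{2r+3}}$, apply Theorem~\ref{thm:homk2ko}$(a)$ together with functoriality of the power construction to get $\matr{K}_{_{4r+2}} \rightarrow \matr{Pet}(n,k)^{^{2r+1}}\rightarrow \matr{C}_{_{2r+3}}^{^{2r+1}}$, and derive a contradiction (the paper identifies $\matr{C}_{_{2r+3}}^{^{2r+1}}=\matr{K}_{_{2r+3}}$, while you use the equivalent vertex-count/injectivity obstruction). Your explicit verification that $r\geq 1$ is a worthwhile refinement, since the paper leaves implicit exactly why $\matr{K}_{_{4r+2}}\rightarrow\matr{K}_{_{2r+3}}$ is impossible.
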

\begin{proof}{By contradiction assume that
$$\matr{Pet}(n,k)\rightarrow \matr{C}_{_{2r+3}}.$$
Then by Theorem~\ref{thm:homk2ko} and the functorial property of the power construction we have
$$ \matr{K}_{_{4r+2}} \rightarrow \matr{Pet}(n,k)^{^{2r+1}}\rightarrow \matr{C}_{_{2r+3}}^{^{2r+1}}=\matr{K}_{_{2r+3}},$$
which is a contradiction.
}\end{proof}

 \begin{proof}{(of Corollary~\ref{cor:compk}$(a)$)\\
Since $(2r+1) < \go (\matr{Pet}(n,k))=2r+3$, by Theorem~\ref{lem:haji} we have
$$\matr{K}_{_{4r+2}}\rightarrow \matr{Pet}(n, k)^{^{2r+1}} \Rightarrow \matr{K}_{_{4r+2}}^{^{\frac{1}{2r+1}}} \rightarrow \matr{Pet}(n, k).$$
Therefore, by Theorem~\ref{thm:haji} we have,
$$\chi_{_c}(\matr{Pet}(n, k)) \geq \chi_{_c}(\matr{K}_{_{4r+2}}^{^{\frac{1}{2r+1}}}) =\frac{(2r+1)\chi_{_c}(\matr{K}_{_{4r+2}})}{r\chi_{_c}(\matr{K}_{_{4r+2}})+1}= \frac{(2r+1)(4r+2)}{r(4r+2)+1}=2+ \dfrac{4r}{4r^2+2r+1}.$$
}\end{proof}
\begin{exm}{Consider the special case in which $n\leq 4k^2+1$, $n\stackrel{2k}{\equiv} 1$ and $\dfrac{n-1}{k}$ is even.
Then $(*)$ has no trivial solution and
$\go (\matr{Pet}(n,k))\leq \dfrac{n-1}{2k}+3$.
Therefore, by setting $2r+1=\dfrac{n-1}{2k}+1$, we have
$$\chi_{_c}(\matr{Pet}(n, k)) \geq 2+ \dfrac{4r}{4r^2+2r+1}.$$
In particular, if $n=4sk+1$, where $\oo\leq k$, by setting $r=s$, we have,
$$\chi_{_c}(\matr{Pet}(4sk+1, 2k)) \geq 2+ \dfrac{4s}{4s^2+2\oo+1}.$$
For example, for $\oo=1$, we have
$$\chi_{_c}(\matr{Pet}(4k+1, 2k)) \geq 2+ \dfrac{4}{7},$$
implying
$\matr{Pet}(4k+1, 2k) \nrightarrow \matr{C}_{5}.$
}\end{exm}

 \begin{exm}{
\begin{enumerate}
\item
Let $k=3$. For any odd $n$, where $3\nmid n$, by Theorem~\ref{thm:oddgirth}, we have
$$2r+1=\left\lbrace
\begin{array}{ll}
n-2\frac{n-1}{3}=\frac{n+2}{3} \quad &n\stackrel{3}{\equiv} 1,\\
n-2\frac{n-2}{3}=\frac{n+4}{3} \quad &n\stackrel{3}{\equiv} 2.\\
\end{array}\right.
$$
Therefore,
$$\chi_{_c}(\matr{Pet}(n, 3)) \geq
\left\lbrace
\begin{array}{ll}
2+\frac{6n-6}{n^2+n+7}
\quad &n\stackrel{3}{\equiv} 1,\\
2+\frac{6n+6}{n^2+5n+13}
\quad &n\stackrel{3}{\equiv} 2.\\
\end{array}\right.$$
For instance, $\chi_{_c}(\matr{Pet}(11, 3)) \geq 2+\dfrac{72}{189}$. Note that since
$$\chi_{_c}(\matr{C}_{_7})=2+\frac{1}{3}<2+\dfrac{72}{189}=\chi_{_c}(\matr{Pet}(11, 3))$$
one may conclude that $\matr{Pet}(11, 3)\nrightarrow \matr{C}_{_7}$.
Similarly,
$$\chi_{_c}(\matr{Pet}(7, 3)) \geq 2+\dfrac{36}{63}>2+\frac{1}{2}=\chi_{_c}(\matr{C}_{_5}),$$
and we have $\matr{Pet}(7, 3)\nrightarrow \matr{C}_{_5}$.
\item
If $n=21$ and $k=5$, by Theorem~\ref{thm:oddgirth} we have
$2r+1=5$, and consequently,
$$\chi_{_c}(\matr{Pet}(21, 5)) \geq 2+\dfrac{8}{21}> 2+\frac{1}{3},$$
implying $\matr{Pet}(21, 5)\nrightarrow \matr{C}_{_7}$.
\end{enumerate}
}\end{exm}

\begin{proof}{
(of Theorem~\ref{thm:homk2ko}$(b)$)\\
Reorganize the vertices of $\matr{Pet}(n,2k)^{^{2k+1}}$ as
$$x_{_0}=u_{_0}, x_{_1}=v_{_0},x_{_2}=u_{_1}, x_{_3}=v_{_1},\dots x_{_{2n-2}}= u_{_{n-1}}, x_{_{2n-1}}= v_{_{n-1}}.$$
In what follows we prove that each $x_{_{i}}$ is connected to $x_{_{i+\ell}}$ for $1 \leq \ell \leq 4k+1$ where summation is modulo $n$. This clearly implies that there is a subgraph in $\matr{Pet}(n,2k)^{^{2k+1}}$ isomorphic to $\overline{\matr{K}}_{_{\frac{2n}{4k+4}}}$.

 Note that, by symmetry, it is enough to prove the claim for $u_{_0}$ and $v_{_0}$. Hence, in what follows, we show that
for any $j \in\{1,2, \dots ,2k\}$ and in Watkins' notation, there exists an odd path of length less than or equal
to $2k+1$ between the following pairs of vertices in $\matr{Pet}(n,2k)$.
\begin{itemize}
\item[{\rm (1)}] $u_{_0}$ \textit{to} $u_{_j}$,\\
Apply Proposition~\ref{pro:power}$(a)$ in $\matr{C}_{_0}.$
\item[{\rm (2)}] $v_{_0}$ \textit{to} $u_{_j}$,\\
Apply Proposition~\ref{pro:power}$(a)$ in (see Figure~\ref{petcomp})
$$\matr{C}_{_0}=u_{_0}u_{_1}\ldots u_{_{2k}}v_{_{2k}}v_{_0}u_{_0}.$$
\item[{\rm (3)}] $v_{_0}$ \textit{to} $v_{_j}$,\\
If $j=2k$ we apply Proposition~\ref{pro:power}$(a)$ in $\matr{C}_{_0}$. Also,
if $j\neq 2k$, the distance between $v_{_0}$ and $u_{_j}$ is not equal to $1$ in $\matr{C}_{_0}$, and consequently, one may
invoke Proposition~\ref{pro:power}$(b)$.
\item[{\rm (4)}] $u_{_0}$ \textit{to} $v_{_j}$,\\
If $j=2k$ we apply Proposition~\ref{pro:power}$(a)$ in $\matr{C}_{_0}$. Also,
if $j\neq 2k$, and $j\neq 1$ the distance between $u_{_0}$ and $u_{_j}$ is not equal to $1$ in $\matr{C}_{_0}$, and consequently,
one may invoke Proposition~\ref{pro:power}$(b)$. On the other hand, if $j=1$ we apply Proposition~\ref{pro:power}$(a)$ in
(see Figure~\ref{petcomp})
$$\matr{C}_{_{n-2k-1}}=u_{_{n-2k-1}}u_{_{n-2k}}\ldots u_{_{0}}u_{_{1}}v_{_{1}}v_{_{n-2k-1}}v_{_{n-2k-1}}.$$
\item[{\rm (5)}] $v_{_0}$ \textit{to} $u_{_{2k+1}}$.\\
Consider the odd path $v_{_{0}}v_{_{2k}}u_{_{2k}}u_{_{2k+1}}$ (see Figure~\ref{petcomp}).
\end{itemize}
\begin{figure}[ht]
\centering{\includegraphics[width=12cm]{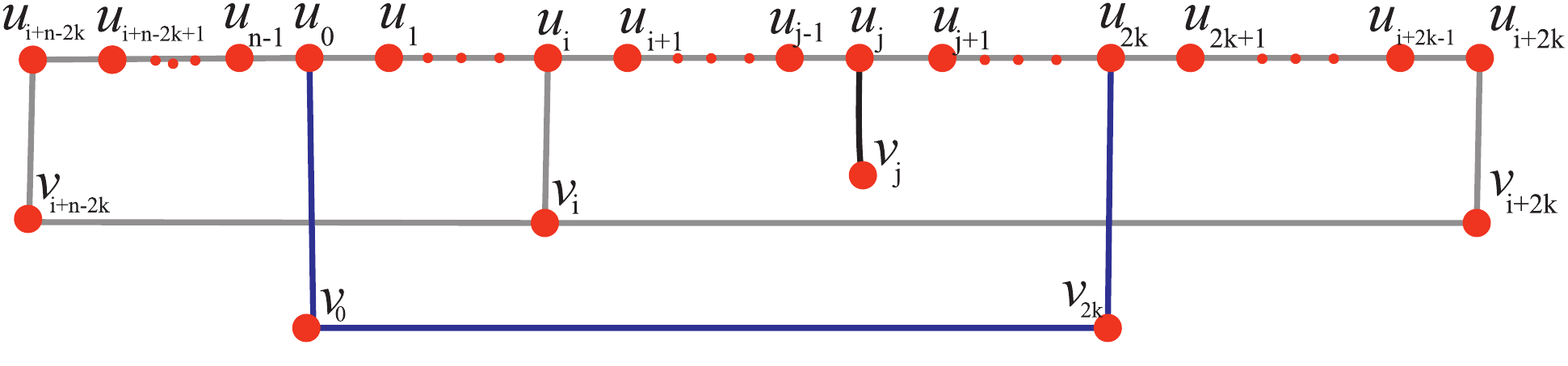}}
\caption{A $2k+3$- cycle in $\matr{Pet}(n,k)$.} \label{petcomp}
\end{figure}
}\end{proof}

 We will need the following result to prove Corollary~\ref{cor:compk}$(b)$.
\begin{alphthm}{{\rm \cite{yang}}\label{pro:zhu} Let $\overline{\matr{K}}_{_{\frac{p}{q}}}$ be the complement of the circular complete graph $\matr{K}_{_{\frac{p}{q}}}$. Then, for $\frac{p}{q}\geq 2$, we have
$$\chi_{_c}(\overline{\matr{K}}_{_{\frac{p}{q}}})=\frac{p}{\lfloor \frac{p}{q}\rfloor}.$$
}\end{alphthm}

 \begin{proof}{(of Corollary~\ref{cor:compk}$(b)$)\\
Since $2k+1 < \matr{Pet}(n,2k)$, by Theorem~\ref{lem:haji} we have,
$$\left(\overline{\matr{K}}_{_{\frac{2n}{4k+2}}}\right)^{^{\frac{1}{2k+1}}}\rightarrow \matr{Pet}(n,2k)\quad \Longleftrightarrow \quad \overline{\matr{K}}_{_{\frac{2n}{4k+2}}}\rightarrow \matr{Pet}(n,2k)^{^{2k+1}}.$$
Theorem~\ref{thm:girthbounds}$(b)$ implies that
$\overline{\matr{K}}_{_{\frac{2n}{4k+2}}}\rightarrow \matr{Pet}(n,2k)^{^{2k+1}}$, and consequently,
$$\left(\overline{\matr{K}}_{_{\frac{2n}{4k+2}}}\right)^{^{\frac{1}{2k+1}}}\rightarrow \matr{Pet}(n,2k).$$
But since $n>2k$ by Theorem~\ref{pro:zhu}, we have
$$\chi_{_c}\left( \matr{Pet}(n,2k)\right)\geq
\chi_{_c}\left(\left(\overline{\matr{K}}_{_{\frac{2n}{4k+2}}}\right)^{^{\frac{1}{2k+1}}}\right)=\frac{2n(2k+1)}{2kn+\lfloor \frac{2n}{4k+2}\rfloor}.$$
}\end{proof}
In \cite{ghebleh}, the following lower bound has been obtained for the circular chromatic number of generalized Petersen graphs.
\begin{alphthm}{\label{thm:gheblelower}{\rm \cite{ghebleh}} For any $n>2k$, we have
$$\chi_{_c}(\matr{Pet}(n,2k))\geq 2+\frac{2}{2k+1}.$$
}\end{alphthm}
The following proposition shows that the lower bound of Theorem~\ref{thm:homk2ko}$(b)$ can be strictly greater than the lower bound presented in Theorem~\ref{thm:gheblelower}.

 \begin{pro}{\label{pro:circular}
Let $n\stackrel{2k+2}{\equiv} y$ and $0<y\leq 2k+1- \dfrac{n}{2k+2}$, then
$$\frac{2n(2k+1)}{2kn+ \lfloor \frac{n}{2k+1}\rfloor} > 2+\frac{2}{2k+1} $$
}\end{pro}
\begin{proof}{
Since $y\leq 2k+1-\dfrac{n}{2k+2}$, if we set $\oo=\lfloor \dfrac{n}{2k+2}\rfloor$, then
$$n=\oo(2k+2)+y \Rightarrow n=\oo(2k+1)+(\oo+y).$$
Since $2k+2\nmid n$ we have $\oo< \dfrac{n}{2k+2}$, and consequently,
$$\oo+y<(\dfrac{n}{2k+2})+(2k+1- \dfrac{n}{2k+2})=2k+1.$$
Therefore,
$\lfloor \dfrac{n}{2k+1} \rfloor =\ \oo\ = \lfloor \dfrac{n}{2k+2} \rfloor$. Finally,
$$\frac{2n(2k+1)}{2kn+ \lfloor \frac{n}{2k+1}\rfloor}=
\frac{2n(2k+1)}{2kn+ \lfloor \frac{n}{2k+2}\rfloor}\geq
\frac{2n(2k+1)}{2kn+ \frac{n}{2k+2}}=
2+\frac{2}{2k+1}.$$
But, equality holds if and only if $2k+2|n$, which is impossible
since $n\stackrel{2k+2}{\equiv} y$ and $y\neq 0$.
}\end{proof}
It must be noted that there exist many pair of numbers $(n,k)$, such that
$4k^2-1<n<4k^2+6k+2$
for which conditions of both Proposition~\ref{pro:circular} and Corollary~\ref{cor:compk}$(b)$ hold simultaneously
(e.g. $5\leq n<12$ for $\matr{Pet}(n,2)$, or $16\leq n<30$ for $\matr{Pet}(n,4)$).

\begin{proof}{(of Theorem~\ref{thm:uphom})
\begin{itemize}
\item[$(a)$] Assume that $n$ is odd, $k$ is even and $n\stackrel{k-1}{\equiv} -2$,
(for $n\stackrel{k-1}{\equiv} 2$ one may use the same argument on the reverse direction for $\matr{Pet}(n,k)$).
Also, let
$$V(\matr{Pb}(n,k)) = \{x_{_{0}},x_{_{1}},\cdots,x_{_{n-1}}\}.$$

 Note that if $j(k-1)=n+2$, then $$\frac{k}{2}j(k-1)=\frac{k}{2}n+k\stackrel{n}{\equiv}k ,\quad (\frac{k}{2}j-1)(k-1)=\frac{k}{2}n+1\stackrel{n}{\equiv}1 \qquad (1),$$
and consequently, since $n$ and $(k-1)$ are relatively prime, we can define
$$\sigma(x_{_{i(k-1)}})\isdef y_{_{i}},$$
where operations are in $\mathbb{Z}_{_{n}}$.

 By the hypothesis,
$s=\frac{(n-4)(k-2)}{2(k-1)}$ and
$$\frac{(n-4)(k-2)}{2(k-1)}=s \leq \frac{k(n+2)}{2(k-1)} \ \Rightarrow \ \frac{k(n+2)}{2(k-1)}-1\leq n-s=n-\frac{(n-4)(k-2)}{2(k-1)}.$$
Hence, using $(1)$ and symmetry, one may verify that the edge $x_{_0}x_{_1}=x_{_0}x_{_{(\frac{k}{2}j-1)(k-1)}}$ is mapped to $y_{_{0}}y_{_{\frac{k}{2}j-1}}=y_{_{0}}y_{_{\frac{k(n+2)}{2(k-1)}-1}} \in E(\matr{K}_{_{\frac{n}{s}}})$, and also,
$x_{_0}x_{_k}=x_{_0}x_{_{\frac{k}{2}j(k-1)}}$ is mapped to $y_{_{0}}y_{_{\frac{k}{2}j}}=y_{_{0}}y_{_{\frac{k(n+2)}{2(k-1)}}}
\in E(\matr{K}_{_{\frac{n}{s}}})$.
\item[$(b)$] Again, using Watkins' notation for $\matr{Pet}(n,k)$, we can define
$$\sigma_{_V}(v_{_i})\isdef \sigma(u_{_{i+1}})\isdef x_{_i},$$
which is a homomorphism.
\item[$(c)$] Here, let
$$V(\matr{C}_{_{n}}^{^k}) = \{x_{_{0}},x_{_{1}},\cdots,x_{_{n-1}}\},$$ and define
$\sigma(v_{_i})\isdef \sigma_{_V}(u_{_{i+1}})\isdef x_{_i}.$
Consider the following three cases.
\begin{itemize}
\item Edges of type $u_{_i}u_{_{i+1}}$, are mapped to $x_{_{i+1}}x_{_{i+2}}$,
\item Edges of type $v_{_i}v_{_{i+k}}$ are mapped to $x_{_{i}}x_{_{i+k}}$,
\item Edges of type $v_{_{i}}u_{_i}$ are mapped to $x_{_{i}}x_{_{i+1}}$,
\end{itemize}
which proves that $\sigma$ is a homomorphism.
\end{itemize}
}\end{proof}

 \begin{proof}{(of Corollary~\ref{cor:upbound})
\begin{itemize}
\item[$(a)$] By Theorem~\ref{thm:uphom}$(b)$,
$$\matr{Pet}(n,k) \rightarrow \matr{Pb}(n,k),$$
and by Theorem~\ref{thm:uphom}$(a)$, for $s=\frac{(n-4)(k-2)}{2(k-1)}$ we have a homomorphism
$$\matr{Pb}(n,k) \rightarrow \matr{K}_{_{\frac{n}{s}}}.$$
Thus, there exists a homomorphism
$$\matr{Pet}(n,k) \rightarrow \matr{K}_{_{\frac{n}{s}}},$$
and consequently,
$$\chi_{_c}(\matr{Pet}(n,k))\leq \chi_{_c}( \matr{K}_{_{\frac{n}{s}}})=\frac{2n(k-1)}{(k-2)(n-4)}.$$
\item[$(b)$]
First, one may verify that $\chi_{_c}(\matr{C}_{_{n}}^{^k})=\frac{2n}{n-k},$ by considering the following homomorphism $\eta: \matr{C}_{_{n}}^{^k} \rightarrow \matr{K}_{_{\frac{n}{\frac{n-k}{2}}}}$,
$$\eta_{_V}(x_{_i})=\left\lbrace
\begin{array}{ll}
y_{_{\frac{i}{2}}}\qquad & i{\rm \ is\ even}\\
y_{_{\frac{n+\ii}{2}}}\qquad& i{\rm \ is\ odd},
\end{array}
\right.$$
in which $V(\matr{K}_{_{\frac{n}{\frac{n-k}{2}}}}) = \{y_{_{0}},y_{_{1}},\cdots,y_{_{n-1}}\}.$

 Now, by Theorem~\ref{thm:uphom}$(c)$, there exists a homomorphism
$$\sigma:\ \matr{Pet}(n,k) \rightarrow \matr{C}_{_{n}}^{^k}.$$
Thus, we have
$$\chi_{_c}(\matr{Pet}(n,k))\leq \chi_{_c}(\matr{C}_{_{n}}^{^k})=\frac{2n}{n-k}.$$
\end{itemize}
}\end{proof}

 \begin{proof}{(of Corollary~\ref{cor:ogirthclosed})
\begin{itemize}
\item[{\rm (a)}] Since $k$ is even, if $\go(\matr{Pet}(n,k))$ tends to infinity, then by Theorem~\ref{thm:girthbounds}, $k$ tends to infinity, and consequently, if $n$ is odd and $n\stackrel{k-1}{\equiv}\pm 2$, by Corollary~\ref{cor:upbound} the circular chromatic number $\chi_{_c}(\matr{Pet}(n,k))$ tends to $2$ because $\frac{2n(k-1)}{(n-4)(k-2)}$ tends to 2.
\item[{\rm (b)}] If both $n$ and $k$ are odd and $n\geq 5k$, by Corollary~\ref{cor:upbound} we have,
$$\chi_{_c}(\matr{Pet}(n,k))\leq \frac{2n}{n-k}\leq \frac{5}{2}.$$
\end{itemize}
}\end{proof}

 \end{document}